\newtheorem{theo}{Theorem}
\newtheorem{lemm}{Lemma}
\newcommand{\Po}{{\cal P}}
\renewcommand{\xi}{Z}
\renewcommand{\alpha}{r}
\newcommand{\N}{\mathbb{N}}
\newcommand{\R}{\mathbb{R}}
\newcommand{\E}{\mathbb{E}}
\renewcommand{\Pr}{\mathbb{P}}
\renewcommand{\E}{\mathbb E \,}
\newcommand{\eqco}{\setcounter{equation}{0}}
\newcommand{\F}{{\cal F}}
\newcommand{\txi}{{\tilde{\xi}}}
\newcommand{\eps}{\varepsilon}
\newcommand{\bdm}{\begin{displaymath}}
\newcommand{\edm}{\end{displaymath}}
\newcommand{\benu}{\begin{enumerate}}
\newcommand{\eenu}{\end{enumerate}}
\newcommand{\beqn}{\begin{equation}}
\newcommand{\eeqn}{\end{equation}}
\newcommand{\be}{\begin{equation}}
\newcommand{\ee}{\end{equation}}
\newcommand{\bea}{\begin{eqnarray}}
\newcommand{\eea}{\end{eqnarray}}
\newcommand{\bean}{\begin{eqnarray*}}
\newcommand{\eean}{\end{eqnarray*}}
\newcommand{\bear}{\begin{eqnarray}}
\newcommand{\eear}{\end{eqnarray}}
\renewcommand{\epsilon}{\varepsilon}
\renewcommand{\R}{\mathbb{R}}
\renewcommand{\P}{{\mathbb P}}
\newcommand{\rhodist}{\mu}
\newcommand{\la}{{\lambda}}
\begin{document}
\title{\bf Non-triviality of the vacancy phase transition for the Boolean model}

\author{
Mathew D. Penrose$^{1}$ \\
{\normalsize{\em University of Bath}} }

 \footnotetext{ $~^1$ Department of
Mathematical Sciences, University of Bath, Bath BA2 7AY, United
Kingdom: {\texttt m.d.penrose@bath.ac.uk} }


\footnotetext{ AMS classifications: 60K35, 60G55, 82B43 }


\maketitle

\begin{abstract}
In the spherical Poisson Boolean model, one takes the union of
random balls centred on the points of a Poisson process in 
Euclidean $d$-space  with $d \geq 2$.
We prove that whenever the radius distribution
has a finite $d$-th moment, there exists 
a strictly positive value for the intensity such that
the vacant region percolates.
\end{abstract}


\section{Introduction}
\label{SecIntro}
The Boolean model \cite{Gilbert,Hall} 
is a classic model of continuum percolation
\cite{MR,BR}  
and more general
stochastic geometry \cite{Hallbk,CSKM,SW,LP}. 
In the spherical version of this model,  an {\em occupied
region} in Euclidean $d$-space 
is defined as a union of balls (sometimes called {\em grains})
 of fixed or random radius
 centred on the points of a Poisson process of intensity $\lambda$.
One may  define a
critical value $\lambda_c$ of $\lambda$,
 depending on the radius distribution,
 above which the occupied region percolates, and a further
critical value $\lambda_c^*$, below which 
 the complementary {\em vacant region} percolates.
It is a fundamental question   whether these critical values
 are {\em non-trivial}, i.e. strictly positive and finite. 

For fixed or bounded radii,  the non-triviality of $\lambda_c$
and $\lambda_c^*$ for $d \geq 2$ is well known and may be proved
using discretization and  counting arguments from
lattice percolation  theory. For unbounded radii, it took some years
to fully characterize those radius distributions for which
$\lambda_c$ is non-trivial \cite{Hall,Gouere}. 
In the present work we carry out a similar task for $\lambda_c^*$.

We now describe the model in more detail
(for yet more details we refer the reader to \cite{MR} or \cite{LP}).
Let $d \in \N$ with $d \geq 2$. Let $\rhodist$ be a probability
measure on $[0,\infty)$
with $\rhodist(\{0\}) <1$.
Let $\lambda \in (0,\infty)$.
On a suitable probability space $(\Omega,\F,\P)$
(with associated expectation operator $\E$),
let $\Po_\lambda= \{y_k: k \in \N\}$ be a homogeneous
 Poisson point process in $\R^d$ of
intensity $\lambda$ (here viewed as a random subset of
$\R^d$ enumerated in order of increasing distance from the origin),
 and let $\rho,\rho_1,\rho_2,\ldots$ be independent nonnegative
random variables with common distribution $\rhodist$,
 independent of $\Po_\lambda$.  For $x \in \R^d$ and $r \geq 0$ we let
 $B(x,r):= \{y \in \R^d:\|y-x\| \leq r\}$,
where $\|\cdot\|$ is the Euclidean norm.
The occupied and vacant regions of the (Poisson, spherical) 
 Boolean model are  random sets $\xi_\lambda \subset \R^d$ and
$\xi^*_\lambda \subset \R^d$, given respectively by 
$$
\xi_\lambda = \cup_{y_k \in \Po_\lambda} B(y_k,\rho_k);
~~~~~~
\xi^*_\lambda = \R^d \setminus \xi_\lambda.
$$
Let $U_\lambda$ be the event that $\xi_\lambda$ {\em percolates}, i.e.
has an unbounded
connected component, and
let $U^*_\lambda$ be the event that $\xi^*_\lambda$ percolates.
 By an ergodicity argument (see \cite{MR}, or \cite{LP}, Exercise 10.1), 
$\Pr[U_\lambda] \in \{0,1\}$
and $\Pr[U^*_\lambda] \in \{0,1\}$. Also
$\Pr[U_\lambda] $ is increasing in $\lambda$,
 while $\Pr[U^*_\lambda]$ is
decreasing in $\lambda$.
Define the critical values
$$
\lambda_c:= \inf\{\lambda:\Pr[U_\lambda]= 1\};
~~~~~
\lambda_c^*:= \inf\{\lambda:\Pr[U^*_\lambda]= 0\}.
$$
It is well known that
 $\lambda_c$ and $\lambda_c^*$ are finite, and that
if $\E[\rho^d] = \infty $ then $\xi_\lambda = \R^d$ almost surely, for
any $\lambda >0$ (see \cite{Hall}, \cite{MR} or \cite{LP}), so that
$\lambda_c = \lambda_c^* =0$.
 Hence $\E[\rho^d] < \infty$ is a necessary 
condition for $\lambda_c$ or $\lambda_c^*$ to be
strictly positive.
In the case of $\lambda_c$,
 Gou\'er\'e \cite{Gouere} has shown that this condition
is also sufficient:
\begin{theo} 
\label{thm:Gouere}
{\rm \cite{Gouere}}
If $\E[\rho^d] < \infty$ then $\lambda_c >0$.
\end{theo}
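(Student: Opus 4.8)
The approach I would take is the multiscale renormalisation of Gou\'er\'e. It is enough to exhibit one $\lambda>0$ for which $\xi_\lambda$ almost surely has no unbounded component, since then $\lambda_c\ge\lambda>0$. For a ball $A\subset\R^d$ let $\mathcal{C}(A)$ denote the union of $A$ with all balls $B(y_k,\rho_k)$ of the Boolean model lying in the connected component of $A\cup\xi_\lambda$ that contains $A$; note that if $A\subseteq A'$ are balls then $\mathcal{C}(A)\subseteq\mathcal{C}(A')$. Fix a large ratio $\beta=\beta(d)\ge 2$, set $s_n=\beta^n$, and define the escape probabilities
$$\pi_n(\lambda):=\Pr\big[\,\mathcal{C}(B(0,s_n))\not\subseteq B(0,\beta s_n)\,\big].$$
If $\mathcal{C}(B(0,R))$ is unbounded for some fixed $R>0$, then for any $n$ with $s_n\ge R$ the set $\mathcal{C}(B(0,s_n))\supseteq\mathcal{C}(B(0,R))$ is unbounded, hence not contained in $B(0,\beta s_n)$, so $\Pr[\mathcal{C}(B(0,R))\text{ unbounded}]\le\inf\{\pi_n(\lambda):s_n\ge R\}$. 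It therefore suffices to prove that $\pi_n(\lambda)\to0$ as $n\to\infty$ for all sufficiently small $\lambda$.

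The crux is a recursive inequality of the form
$$\pi_{n+1}(\lambda)\;\le\;c_1\,\pi_n(\lambda)^2\;+\;c_2\,\lambda\,\E\big[\rho^d\,\1\{\rho\ge s_n\}\big],$$
with constants $c_1,c_2$ depending only on $d$ and $\beta$. The second term handles the event that some ball $B(y_k,\rho_k)$ of the model with $\rho_k\ge s_n$ meets $B(0,\beta s_{n+1})$: by the Mecke equation this has probability at most $\lambda\,\E[\Vol(B(0,\beta s_{n+1}+\rho))\1\{\rho\ge s_n\}]$, and since $\beta s_{n+1}+\rho\le(\beta^2+1)\rho$ on $\{\rho\ge s_n\}$, this is at most $c_2\lambda\,\E[\rho^d\1\{\rho\ge s_n\}]$. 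On the complementary event, every ball relevant to an escape from $B(0,s_{n+1})$ out past $\partial B(0,\beta s_{n+1})$ has radius below $s_n=s_{n+1}/\beta$, which for $\beta$ large is far smaller than the radial widths in play; one then argues, exactly as in the classical bounded-radius renormalisation used to get $\lambda_c>0$ for bounded grains, that such an escape forces two crossings of disjoint concentric sub-annuli, each a translate of an event of probability at most $\pi_n(\lambda)$ up to an $O(1)$ union bound over a bounded covering of a sphere by balls of radius of order $s_n$, and that the two crossings are independent by the independence of the Poisson process on disjoint sets. Making this geometric bookkeeping rigorous — picking the intermediate scales, matching the small-ball crossings to recentred copies of the scale-$n$ escape event, and keeping the two pieces genuinely independent — is the step I expect to be the main obstacle, and it is exactly where the freedom to take $\beta$ large, and Gou\'er\'e's ingenuity, are needed.

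Given the recursion, the rest is bookkeeping. For each fixed $\lambda$, $\E[\rho^d]<\infty$ and dominated convergence give $\epsilon_n:=c_2\lambda\,\E[\rho^d\1\{\rho\ge s_n\}]\to 0$ as $n\to\infty$; the same moment hypothesis also gives, via $\pi_0(\lambda)\le\Pr[\text{some ball of the model meets }B(0,1)]\le\lambda\,\E[\Vol(B(0,1+\rho))]$, that $\pi_0(\lambda)\to0$ as $\lambda\to0$. Fix $\delta\le 1/(2c_1)$ and choose $\lambda$ small enough that $\pi_0(\lambda)\le\delta$ and $c_2\lambda\,\E[\rho^d]\le\delta/2$. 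Then $c_1\delta^2+\epsilon_n\le\delta/2+\delta/2=\delta$, so by induction $\pi_n(\lambda)\le\delta$ for all $n$; plugging this back in, $\pi_{n+1}(\lambda)\le c_1\delta\,\pi_n(\lambda)+\epsilon_n\le\tfrac12\pi_n(\lambda)+\epsilon_n$ with $\epsilon_n\to0$, whence $\pi_n(\lambda)\to0$. Consequently $\Pr[\mathcal{C}(B(0,R))\text{ unbounded}]=0$ for every $R>0$, and by monotonicity of $R\mapsto\mathcal{C}(B(0,R))$, with probability one there is no $R>0$ with $\mathcal{C}(B(0,R))$ unbounded. But any unbounded component of $\xi_\lambda$ contains some ball $B(y_k,\rho_k)$, and $\mathcal{C}(B(y_k,\rho_k))\subseteq\mathcal{C}(B(0,\|y_k\|+\rho_k))$, so on $U_\lambda$ there is such an $R$. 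Hence $\Pr[U_\lambda]=0$ and $\lambda_c\ge\lambda>0$. (That $\E[\rho^d]<\infty$ cannot be dropped is already noted above: if $\E[\rho^d]=\infty$ then $\xi_\lambda=\R^d$ a.s.)
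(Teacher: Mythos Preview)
The paper does not itself prove this theorem; it is stated as a known result, attributed to Gou\'er\'e \cite{Gouere}, and the paper's own contribution begins with the analogous statement for the vacant phase (Theorem~\ref{thm:main}). So there is no in-paper proof to compare against.

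That said, your proposal is a faithful outline of Gou\'er\'e's multiscale argument from \cite{Gouere}: the escape probabilities $\pi_n(\lambda)$ at geometric scales, the recursive inequality $\pi_{n+1}\le c_1\pi_n^2+c_2\lambda\,\E[\rho^d\1\{\rho\ge s_n\}]$ with a squared local term and a large-ball tail error, and the iteration driving $\pi_n\to 0$ for small $\lambda$. Your bookkeeping after the recursion (the induction to keep $\pi_n\le\delta$, then the contraction $\pi_{n+1}\le\tfrac12\pi_n+\epsilon_n$) is sound, and your bound $\pi_0(\lambda)\le\lambda\,\E[\Vol(B(0,1+\rho))]$ is correct since $\mathcal{C}(B(0,1))=B(0,1)$ whenever no grain meets $B(0,1)$. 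You are right to flag the geometric step --- extracting two \emph{independent} scale-$n$ escapes from a single scale-$(n+1)$ escape when all radii are below $s_n$ --- as the place requiring care; one needs two disjoint sub-annuli separated by more than $2s_n$ so that the grains used in each crossing have centres in disjoint regions, together with a covering of the inner spheres by $O(\beta^{d-1})$ balls of radius $s_n$. With $\beta$ chosen large enough this goes through, and your sketch matches the cited source.
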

We here present a similar result for $\lambda_c^*$:
\begin{theo} 
\label{thm:main}
If $\E[\rho^d ] < \infty$ then $\lambda^*_c >0$.
\end{theo}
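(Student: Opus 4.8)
The plan is to show that for sufficiently small $\lambda$ the vacant region contains an unbounded component, by a renormalization (block/coarse-graining) argument in which we declare a block ``good'' if it is free of ``large'' grain influence and contains a vacant crossing path, then compare the process of good blocks to a highly supercritical site percolation on $\Z^d$ (in fact it suffices to do this in a two-dimensional coordinate plane, since percolation of the trace of $\xi^*_\lambda$ on a fixed $2$-plane implies percolation of $\xi^*_\lambda$ in $\R^d$). The main difficulty, and the reason the naive bounded-radius argument fails, is the long-range dependence coming from big balls: a Poisson point very far away, carrying a huge radius $\rho$, can cover a given block. The moment hypothesis $\E[\rho^d]<\infty$ is exactly what is needed to control the total volume covered by such big balls; this is the same phenomenon exploited by Gou\'er\'e \cite{Gouere} for $\lambda_c$, and we reuse his large-ball estimates.

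First I would fix a large length scale $L$ and partition $\R^d$ into cubes of side $L$. Split the Poisson process into the ``small-radius'' points (those with $\rho_k \le \epsilon L$ for a small constant $\epsilon$) and the ``large-radius'' points (those with $\rho_k > \epsilon L$). For the small-radius part: conditionally on having no large-radius point whose ball meets a given cube $Q$ and its neighbours, the occupied set inside the $3\times 3\times\cdots$ neighbourhood of $Q$ is that of a Boolean model with radii bounded by $\epsilon L$; by scaling this is a Boolean model with radii bounded by $\epsilon$ and intensity $\lambda L^d$, and a classical discretization/Peierls argument (as for bounded radii, see \cite{MR}) shows that for $\lambda L^d$ small the probability that the vacant region of this truncated model crosses $Q$ from one face-pair to the other, and moreover crosses every sub-cube of a fine grid inside $Q$ so that good crossings in adjacent cubes connect up, is as close to $1$ as we wish. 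For the large-radius part: the expected number of large-radius balls whose closure meets a fixed cube $Q$ is, by the Mecke formula, $\lambda \int_{\R^d} \rhodist((\epsilon L,\infty)\ dr\text{-part}) \,\Vol(Q\oplus B(0,r))\,dy$-type integral, which after a standard computation is bounded by $c\,\lambda\, L^{d}\,\E[\rho^d \1\{\rho > \epsilon L\}]/( \epsilon L)^{?}$ — more carefully, $\lambda \E[(L+2\rho)^d \1\{\rho>\epsilon L\}] \le c\lambda \E[\rho^d\1\{\rho>\epsilon L\}]$, which tends to $0$ as $L\to\infty$ once $\E[\rho^d]<\infty$, uniformly in $\lambda$ bounded. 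So if we first choose $\epsilon$ small, then $L$ large, then $\lambda$ small with $\lambda L^d$ also small (this triple choice is the delicate bookkeeping), the probability that a cube is ``spoiled'' by a large ball is tiny and the conditional probability that it carries all the required small-radius vacant crossings is near $1$.

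Next I would assemble these two estimates: call a cube $Q$ \emph{good} if no large-radius ball meets $Q$ or any cube adjacent to it, and the small-radius vacant crossings inside the neighbourhood of $Q$ all exist. The events ``$Q$ is good'' depend on the configuration only in a bounded neighbourhood of $Q$, hence the good-cube field is a $k$-dependent site percolation on $\Z^d$ for some fixed $k$; by the three-step choice above, $\P[Q \text{ good}]$ can be made arbitrarily close to $1$. By the standard comparison of $k$-dependent fields with high marginal to Bernoulli percolation (Liggett–Schonmann–Stacey), the good cubes percolate. Finally, a deterministic geometric lemma shows that in an infinite connected cluster of good cubes the vacant crossings patch together into an unbounded connected subset of $\xi^*_\lambda$, giving $\P[U^*_\lambda]=1$ and hence $\lambda \le \lambda_c^*$; since such $\lambda>0$ exists, $\lambda_c^*>0$. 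I expect the main obstacle to be making the ``small-radius vacant crossing'' event robust enough that crossings in neighbouring good cubes are guaranteed to connect — this forces one to demand crossings of a whole family of overlapping sub-boxes and to run the bounded-radius Peierls estimate for that stronger event — together with the precise order of quantifiers in choosing $\epsilon$, $L$, $\lambda$.
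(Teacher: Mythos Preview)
Your proposal contains a genuine gap at the step where you claim that the good-cube field is $k$-dependent. The event ``no large-radius ball meets $Q$ or its neighbours'' is \emph{not} determined by the marked Poisson configuration in a bounded neighbourhood of $Q$: a point $y_k$ at distance $10^{100}L$ from $Q$, carrying a radius $\rho_k > 10^{100}L$, makes $Q$ bad. Thus the good-cube process has infinite-range dependence, and Liggett--Schonmann--Stacey does not apply. You correctly identified this long-range effect as the main difficulty in your opening paragraph, but your proposed fix (truncate at $\epsilon L$ and bound the expected number of large balls hitting $Q$) only controls the \emph{marginal} probability that a cube is spoiled, not the dependence structure.

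Moreover, a single-scale block argument cannot be repaired easily here: the set of cubes hit by some large ball is itself a discrete Boolean model on $\Z^d$, with grain radii distributed as $\rho/L$ conditioned on $\rho>\epsilon L$. For a Pareto tail $\P[\rho>r]\sim r^{-\alpha}$ with $d<\alpha<\infty$, this conditional law converges (as $L\to\infty$) to a fixed Pareto law, so coarse-graining reproduces essentially the same problem at a larger scale rather than reducing it to a bounded-radius one.

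The paper overcomes this with a genuinely \emph{multiscale} argument (in $d=2$, then slicing for $d\ge 3$). At each scale $\alpha$ it defines a ``local'' short-way crossing event $F_\lambda(\alpha)$ using only grains centred within distance $\alpha$ of a $10\alpha\times\alpha$ strip, and an ``outside influence'' event $G_\lambda(\alpha)$ that a grain centred in $B(10^6\alpha)$ but outside that $\alpha$-neighbourhood touches the strip. A combinatorial crossing lemma gives
\[
\P[F_\lambda(10\alpha)] \le C_1\bigl(\P[F_\lambda(\alpha)]^2 + \P[G_\lambda(\alpha)]\bigr),
\]
while $\E[\rho^2]<\infty$ forces $\sum_n \P[G_\lambda(10^n)]<\infty$. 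Iterating the recursion along the geometric sequence $10^n b$ (choosing $b$ large and then $\lambda$ small to seed it) yields $\sum_n(\P[F_\lambda(10^n b)]+\P[G_\lambda(10^n b)])\le 1/2$. A nested family of strips $S_n$ of lengths $10^n b$ then shows that with positive probability every $S_n$ carries a long-way vacant crossing, and these crossings link up into an unbounded vacant component. The point is that the outside-influence term is handled \emph{at every scale}, not just one; this is what your single-scale scheme is missing.
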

Theorem \ref{thm:main} says that
 for the spherical Poisson Boolean model
with
$\E[\rho^d ] < \infty$,
there exists a non-zero value of the intensity $\lambda$ for
which the vacant region percolates.   
In fact we can say more:
\begin{theo}
\label{thm:sharp}
For any $\mu$, if $d=2$ then $\lambda^*_c = \lambda_c$.
If $d\geq 3$ then $\lambda_c^* \geq \lambda_c$.
\end{theo}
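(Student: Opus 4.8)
The plan is to establish, for every $d\ge 2$, the inequality $\lambda_c^*\ge\lambda_c$, and then, when $d=2$, the reverse inequality $\lambda_c^*\le\lambda_c$. If $\E[\rho^d]=\infty$ there is nothing to do: as recalled above $\xi_\lambda=\R^d$ a.s.\ for every $\lambda>0$, so $\lambda_c=\lambda_c^*=0$. Assume therefore $\E[\rho^d]<\infty$; by Lyapunov's inequality this also gives $\E[\rho^2]<\infty$. For the inequality $\lambda_c^*\ge\lambda_c$ it suffices to show that $\Pr[U_\lambda^*]=1$ for every $\lambda<\lambda_c$. Fix such a $\lambda$. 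Then $\Pr[U_\lambda]=0$, so a.s.\ $\xi_\lambda$ contains no unbounded connected set, and hence the same is true of $\xi_\lambda\cap A$ for any $A\subseteq\R^d$.

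Fix a $2$-dimensional affine subspace $P$ of $\R^d$, which we identify with $\R^2$; when $d=2$ take $P=\R^d$, so that this step is vacuous. The key observation is that the trace $\xi_\lambda\cap P$, regarded as a subset of $P$, is itself the occupied region of a planar Poisson Boolean model. Indeed, a ball $B(y_k,\rho_k)$ meets $P$ exactly when the distance $h_k$ from $y_k$ to $P$ satisfies $h_k\le\rho_k$, in which case $B(y_k,\rho_k)\cap P$ is the closed disc in $P$ of radius $\sqrt{\rho_k^2-h_k^2}$ centred at the orthogonal projection of $y_k$ onto $P$; by the mapping theorem for Poisson processes the projected centres, carrying these disc radii as marks, form a homogeneous Poisson process on $P$, of some intensity $\tilde\lambda=c\,\lambda\,\E[\rho^{d-2}]\in(0,\infty)$ (with $c>0$ depending only on $d$) and some radius law $\tilde\mu$; since $\E[\rho^d]<\infty$, the law $\tilde\mu$ has no atom at $0$ and a finite second moment. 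Because $\xi_\lambda\cap P$ has no unbounded connected set, this planar model is subcritical, i.e.\ $\tilde\lambda\le\lambda_c(\tilde\mu)$, where $\lambda_c(\tilde\mu)$ denotes the critical intensity for occupied percolation of the planar Boolean model with radius law $\tilde\mu$. Moreover the inequality is \emph{strict}: if $\tilde\lambda=\lambda_c(\tilde\mu)$, then for $\lambda'\in(\lambda,\lambda_c)$ the planar model associated with $\lambda'$ has intensity $c\lambda'\E[\rho^{d-2}]>c\lambda\E[\rho^{d-2}]=\tilde\lambda=\lambda_c(\tilde\mu)$, hence its occupied region percolates, hence $\xi_{\lambda'}\cap P$ and therefore $\xi_{\lambda'}$ percolates, contradicting $\lambda'<\lambda_c$.

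Now invoke the planar identity $\lambda_c(\nu)=\lambda_c^*(\nu)$, valid for every radius law $\nu$ whose second moment is finite; this is the classical two-dimensional duality (see \cite{MR}), known for bounded radii through Russo--Seymour--Welsh (RSW) box-crossing estimates, the self-duality of rectangle crossings, and uniqueness of the infinite cluster, and extended to the $L^2$ case by truncating the finitely many large grains present in any bounded window and bounding the resulting error via the moment hypothesis. Applying it to $\tilde\mu$, and using $\tilde\lambda<\lambda_c(\tilde\mu)=\lambda_c^*(\tilde\mu)$, the vacant region of the planar model percolates a.s.; but that vacant region is exactly $P\cap\xi_\lambda^*$, so $\xi_\lambda^*$ contains an unbounded connected set a.s., i.e.\ $\Pr[U_\lambda^*]=1$. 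This proves $\lambda_c^*\ge\lambda_c$ for all $d\ge 2$. When $d=2$, the reverse inequality $\lambda_c^*\le\lambda_c$ is the other half of the same planar identity: for $\lambda>\lambda_c$ the occupied region percolates, its infinite cluster is a.s.\ unique, and the RSW estimates yield occupied circuits around the origin at arbitrarily large scales, so the vacant cluster of the origin is a.s.\ bounded and, by ergodicity, $\Pr[U_\lambda^*]=0$. Combining the two directions gives $\lambda_c=\lambda_c^*$ when $d=2$.

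The bulk of the work, and the main obstacle, is the planar input: setting up the RSW box-crossing machinery for the two-dimensional Boolean model when the radius law is unbounded and only $L^2$, since the large grains create long-range dependence and one must verify that the truncation error stays negligible at every scale entering the argument — this is exactly where finiteness of the second moment is used. Granting that, together with the standard ingredients (the $0$--$1$ laws for $U_\lambda$ and $U_\lambda^*$, uniqueness of the infinite cluster, and rectangle-crossing duality in the plane), the higher-dimensional statement follows cleanly from the trace construction, the only remaining points being the routine identification of the planar trace as a Poisson Boolean model with the stated parameters and the strictness of its subcriticality noted above.
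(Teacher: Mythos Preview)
Your reduction from $d\ge 3$ to the planar case via the trace on a $2$-dimensional subspace, including the strict-subcriticality argument using an intermediate $\lambda'\in(\lambda,\lambda_c)$, is exactly what the paper does. The gap is in the planar step itself.

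You invoke the identity $\lambda_c(\nu)=\lambda_c^*(\nu)$ for planar Boolean models with arbitrary $L^2$ radius law $\nu$ as a known fact, citing \cite{MR} plus a truncation argument. But this identity under the bare hypothesis $\E[\rho^2]<\infty$ \emph{is} the $d=2$ content of the theorem you are asked to prove. In \cite{MR} it is established only for bounded radii; \cite{ATT} extended it to $\E[\rho^2\log\rho]<\infty$; the present paper (and, independently, \cite{ATT2}) is precisely what closes the gap to $\E[\rho^2]<\infty$. Your one-line sketch (``truncating the finitely many large grains present in any bounded window'') does not do this: the RSW/circuit arguments run at arbitrarily large scales, and at every scale there may be grains of comparable diameter whose effect must be controlled uniformly in the scale. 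A single truncation level does not achieve this, and passing to a limit in the critical values of truncated models is not justified without further input.

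What the paper actually does for $d=2$ is take as \emph{input} the box-crossing estimates of \cite{ATT} (their Theorem~1.1: for $\lambda>\lambda_c$ long-way occupied crossings of fixed-aspect rectangles have probability tending to $1$, and for $\lambda<\lambda_c$ short-way occupied crossings have probability tending to $0$), and feed the subcritical half into the multiscale renormalisation developed here in Lemmas~\ref{LemC1}--\ref{lemifnone}. Concretely, for $\lambda<\lambda_c$ one uses \cite{ATT} to make $f_\lambda(b)$ small for some large $b$, then iterates the recursion $f_\lambda(10r)\le f_\lambda(r)^2+g_\lambda(10r)$ together with the summability of $g_\lambda$ (this is where $\E[\rho^2]<\infty$ enters, via Lemma~\ref{lemF}) to obtain vacant long-way crossings of a nested sequence of strips, forcing $\xi_\lambda^*$ to percolate. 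The direction $\lambda_c^*\le\lambda_c$ is the easier one and follows directly from the supercritical crossing estimate in \cite{ATT} by building occupied circuits in annuli. So your high-level outline is right, but the substantive planar argument---the multiscale control of large grains---is missing, and that argument is the heart of the result.
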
 
Sarkar \cite{Sarkar} has proved the strict inequality $\lambda^*_c > \lambda_c$
for $d \geq 3$ when $\rho$ is deterministic, i.e. when $\mu$ is
a Dirac measure.

Theorem \ref{thm:main} could be seen as a trivial corollary
of Theorems \ref{thm:Gouere} and \ref{thm:sharp}.
However, we would like to prove Theorems
\ref{thm:main} and \ref{thm:sharp} separately, to emphasise
that our proof of Theorem \ref{thm:main} is  
self-contained
 (and quite short), 
whereas our proof of Theorem
\ref{thm:sharp} is not, as we now discuss.

In parallel and independent work,
 Ahlberg, Tassion and Teixeira \cite{ATT2}  prove
a similar set of results to our Theorems \ref{thm:main}
and \ref{thm:sharp}; their proof seems to be completely
different from ours. Earlier,
 in \cite{ATT} they proved
for $d=2$ that (among other things) $\lambda_c^* = \lambda_c$ 
whenever $\E[\rho^2 \log \rho] <\infty $.

We prove Theorem \ref{thm:main} in the next two sections.
 The proof of Theorem \ref{thm:sharp} 
is given by adapting
 our proof of Theorem \ref{thm:main} using results in 
\cite{ATT}, and is therefore heavily reliant on \cite{ATT};
we give this argument
in Section \ref{secsharp}.

Finally, we consider the relation between
 $\lambda_c^*$
and a different percolation threshold, defined in terms
of expected diameter. 
For non-empty $B \subset \R^d$, let $D(B):= \sup_{x,y \in B} (\|x-y\|)$,
the Euclidean diameter of $B$, and  set $D(\emptyset)=0$.
Let $W_\la$ be the connected component of $\xi_\lambda $ containing the
origin, and set
$$
\lambda_D := \inf \{\lambda: \E[D(W_\lambda)] = \infty\}.
$$ 
It is easy to see that that $\lambda_D  \leq \lambda_c$. 
Therefore by Theorem \ref{thm:sharp}, 
for any $\mu$ we have
\bea
\label{eq:D}
\lambda_c^* \geq \lambda_D.
\eea
In Section \ref{secfirstpf} we present an alternative, 
 rather simple, direct proof of (\ref{eq:D}) (not
reliant on any other results, either here or in
\cite{ATT}).

A further result in \cite{Gouere} says that
$\lambda_D >0$, if and only if 
$\E[\rho^{d+1}] < \infty$. Therefore 
(\ref{eq:D}) provides an alternative proof 
that $\lambda_c^*>0$ under this stronger moment
condition.  Moreover, it is known in many cases 
 that $\lambda_D = \la_c$ (see e.g. \cite{MR,Z,DCRT}),
and in all such cases our proof
of (\ref{eq:D}) provides another way to
show that $\lambda_c^* \geq \lambda_c$.

Our proof of Theorems \ref{thm:main} and \ref{thm:sharp}
for $d=2$
uses a form of multiscale methodology,
 inspired by \cite{Gouere},
which may be of use in other settings. 
We conclude this section with an outline of the method.
 At length-scale
$\alpha$, we define functions $f(\alpha)$ and $g(\alpha)$.
Up to a constant multiple, $f(\alpha)$ is the probability of
a `local' event (defined in terms of a box-crossing, using
only grains centred near the box)  
while $g(\alpha)$ is
the probability of an `outside influence' event
that is still determined at length-scale $\alpha$. 

We show that  $g(10^n)$ is summable in $n$ (see Lemma \ref{lemF} below),
 and also that $f(10^{n+1}) \leq f(10^n)^2 + g(10^{n+1})$ (see (\ref{0530a}) 
and (\ref{0530d}) below). From this 
we can deduce that there exists $n_0$ such that
$\sum_{n \geq n_0}  (f(10^n)+g(10^n)) < 1$, if only we can get
started by showing $f(n_0)$ is sufficiently small. 
This can be done either by taking $\lambda$ small (in the
proof of Theorem \ref{thm:main}) or for general $\lambda < \lambda_c$,
 by taking $n_0$ large  and using a result from \cite{ATT}
(in the proof of Theorem \ref{thm:sharp}). Finally, 
we can take a sequence of boxes of length $10^{n+n_0}$, such
that if none of these is crossed then $\xi_\la^*$ percolates.

We let $o$ denote the origin in $\R^d$,
and for $r >0$ put $B(r) := B(o,r)$.

\section{Preparation for the proof}
\eqco
  
Throughout this section
we assume that $d=2$.
 We give some definitions and lemmas required for our  proof
of Theorem \ref{thm:main}.

Given $\lambda >0$, for each Borel set $A \subset \R^2$ we define the 
random set 
$$
\xi^A_\lambda := \cup_{\{k:y_k \in \Po_\lambda \cap A\}} B(y_k,\rho_k).
$$
Also, for $r >0$ set
$
A_r:=  \cup_{x \in A} B(x,r),
$
the (deterministic) $r$-neighbourhood of $A$.

Given $\alpha >0$, let 
$S(\alpha):= [-5\alpha,5\alpha]\times [-\alpha/2,\alpha/2]$, 
 the closed $10 \alpha \times \alpha$ horizontal 
rectangle (or `strip') centred at $o$. 
Note that $S(\alpha)_\alpha$ is a $12 \alpha \times 3 \alpha$
rectangle with its corners smoothed (this smoothing is not important to us).

Let $F_{\lambda}(\alpha)$
be the event that there is a short-way crossing of 
$S(\alpha)$ by $\xi_{\lambda}^{S(\alpha)_\alpha}$ (that is,
by grains centred within the $\alpha$-neighbourhood of $S(\alpha)$).
Also define the event 
\bea
G_{\lambda}(\alpha) = \{ 
 \xi_\lambda^{B(10^6\alpha) \setminus S(\alpha)_\alpha}
 \cap S(\alpha)  \neq \emptyset \}.
\label{Gdef}
\eea
%

\begin{lemm}
\label{LemC1}
 There is a constant $C_1 \geq 1$ such that for all $\lambda >0$ and
 $\alpha  >0  $,
\bea
\Pr[ F_{\lambda}(10 \alpha) ] \leq C_1 ( 
\Pr[ F_{\lambda}( \alpha) ]^2 + 
\Pr[ G_{\lambda}( \alpha) ]).
\label{0530a}
\eea
\end{lemm}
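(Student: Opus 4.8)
The plan is to establish the bound (\ref{0530a}) by locating, inside any short-way crossing of $S(10\alpha)$, two \emph{independent} short-way crossing events at scale $\alpha$, plus an error of $G$-type. Throughout I use that the Poisson Boolean model is stationary and isotropic, so that for any congruent copy $T$ of $S(\alpha)$ (a translate, or a translate of its $90^{\circ}$ rotation), with centre $c$, the probability of a short-way crossing of $T$ by grains centred in $T_\alpha$ equals $\Pr[F_\lambda(\alpha)]$, and the probability of the associated outside-influence event---some grain centred in $B(c,10^6\alpha)\setminus T_\alpha$ meeting $T$---equals $\Pr[G_\lambda(\alpha)]$.

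\textbf{Step 1 (geometry).} I would first prove the deterministic statement: if $\xi_\lambda^{S(10\alpha)_{10\alpha}}$ short-way crosses $S(10\alpha)$, then there are congruent copies $T_1,T_2$ of $S(\alpha)$, each contained in $B(10^2\alpha)$ and each belonging to a fixed finite family, with $(T_1)_\alpha\cap(T_2)_\alpha=\emptyset$, such that $\xi_\lambda^{S(10\alpha)_{10\alpha}}$ short-way crosses each $T_i$. Within a bounded region the occupied set is a finite union of closed balls, so the crossing contains a continuous occupied path $\gamma$ joining the two long sides of $S(10\alpha)$. Restricting $\gamma$ to the top slab $[-50\alpha,50\alpha]\times[4\alpha,5\alpha]$, and separately to the bottom slab $[-50\alpha,50\alpha]\times[-5\alpha,-4\alpha]$, yields in each case an occupied sub-path crossing that slab the short way. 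If such a sub-path has horizontal extent at most $10\alpha$, it lies in, and crosses the short way, a horizontal translate of $S(\alpha)$; if its horizontal extent exceeds $\alpha$, then---being a connected set meeting two vertical lines at horizontal distance $\geq\alpha$---by a standard planar-topology fact it contains a sub-path joining the two vertical sides of some $\alpha\times\alpha$ square inside that square, which is a short-way crossing of a vertical copy of $S(\alpha)$. Restricting the admissible positions to a grid of mesh $\alpha$ keeps the family of copies finite.

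\textbf{Step 2 ($F$ or $G$).} Fix one copy $T$ from the finite family, with centre $c$, and suppose $\xi_\lambda^{S(10\alpha)_{10\alpha}}$ short-way crosses $T$. Every grain meeting this crossing is centred in $S(10\alpha)_{10\alpha}$, hence within distance $O(\alpha)$---in particular less than $10^6\alpha$---of $c$. Either all such grains are centred in $T_\alpha$, in which case the crossing lies in $\xi_\lambda^{T_\alpha}$ and we have an event of probability $\Pr[F_\lambda(\alpha)]$; or some such grain is centred in $B(c,10^6\alpha)\setminus T_\alpha$ while meeting $T$, and we have an event of probability $\Pr[G_\lambda(\alpha)]$.

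\textbf{Step 3 (assembly and the main obstacle).} By Steps 1 and 2, on $F_\lambda(10\alpha)$ there is an ordered pair $(T_1,T_2)$ from the finite family with $(T_1)_\alpha\cap(T_2)_\alpha=\emptyset$ such that for each $i$ the $F$-type or the $G$-type event attached to $T_i$ occurs. The two $F$-type events depend only on the grains in the disjoint sets $(T_1)_\alpha$ and $(T_2)_\alpha$, hence are independent, so the probability that both unions occur is at most $\Pr[F_\lambda(\alpha)]^2+2\Pr[G_\lambda(\alpha)]$; summing over the bounded number of admissible pairs gives (\ref{0530a}) with $C_1$ equal to twice the number of pairs (in particular $C_1\geq1$). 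I expect Step 1 to be where the real work lies, above all the disjointness of the $\alpha$-neighbourhoods: two horizontal copies from the two slabs are automatically disjoint in the vertical coordinate, but a vertical copy spans the full height of $S(10\alpha)$, so a vertical top-copy and a vertical bottom-copy must be separated horizontally; this needs a short case analysis on the horizontal extents of the slab sub-paths (a sub-path is forced to yield a vertical copy only when its horizontal extent exceeds $10\alpha$, which leaves more than $7\alpha$ of room for placing the copy, enough to force horizontal separation greater than $3\alpha$ between the two vertical copies). Controlling the grid mesh and the number of copies---hence $C_1$---explicitly is routine but needs care.
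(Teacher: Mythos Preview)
Your strategy---extracting two disjoint scale-$\alpha$ crossing events directly from the crossing path $\gamma$---is different from the paper's. The paper argues by duality instead: it fixes in advance a ``knitted'' family of 37 congruent copies of $S(\alpha)$ lying entirely in the band $\R\times[-12\alpha,-2\alpha]$ (and a mirror family in $\R\times[2\alpha,12\alpha]$) such that long-way vacant crossings of all 37 would force a long-way vacant crossing of a thin bottom (resp.\ top) sub-strip of $S(10\alpha)$. A short-way occupied crossing of $S(10\alpha)$ rules out both long-way vacant crossings, so at least one rectangle from each family is short-way occupied-crossed, and since the two bands have disjoint $\alpha$-neighbourhoods by construction, independence is automatic with no case analysis at all; the paper takes $C_1=37^2$.

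Your plan can be made to work, but the disjointness argument has a genuine gap as written. You treat the two-horizontal case (vertical separation is free) and the two-vertical case (horizontal separation, using the $>10\alpha$ extent), but not the mixed case. Suppose the top slab yields a horizontal copy (top extent $e_1=10\alpha$, so the horizontal copy is pinned) while the bottom slab yields a vertical copy with extent $L_2$ only slightly above $10\alpha$, and the two sub-paths occupy the same horizontal range. A vertical copy spanning the full height of $S(10\alpha)$ has its $\alpha$-neighbourhood overlapping that of the horizontal copy in the vertical direction, so you must separate them horizontally; but the forbidden horizontal interval for the vertical strip's left edge has length $15\alpha$, while the available range is only $L_2-\alpha\approx 9\alpha$, and there is no room. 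The clean fix is to drop the ``spans the full height'' choice: place the top-slab vertical copy so it extends \emph{upward} into $\R\times[4\alpha,14\alpha]$ and the bottom-slab vertical copy \emph{downward} into $\R\times[-14\alpha,-4\alpha]$. Then every top copy (horizontal or vertical) has $\alpha$-neighbourhood in $\R\times[3\alpha,15\alpha]$ and every bottom copy in $\R\times[-15\alpha,-3\alpha]$, so disjointness comes from vertical separation alone---precisely the mechanism the paper's banded construction exploits. With that change your Step~3 goes through without any horizontal case analysis.
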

\begin{proof}
Fix $(\lambda,\alpha)$.
Set $S := S(10\alpha) = [-50 \alpha ,50 \alpha] \times [-5\alpha ,5 \alpha]$.
Let $T:= [-50\alpha,50\alpha] \times [-4.5 \alpha,-3.5\alpha]$
and 
 $\tilde{T}:= [-50\alpha,50\alpha] \times [3.5 \alpha,4.5\alpha]$,
so that $T$ and $\tilde{T}$ are horizontal $100 \alpha \times \alpha$
thin strips along $S$ near the bottom and top of $S$, respectively.

We shall now define a collection $R_1,\ldots,R_{37}$ of horizontal $10\alpha
\times \alpha$ and
vertical  $\alpha \times 10\alpha$  rectangles
 that knit together in such a way that
if there is a long-way vacant crossing of each of $R_1,\ldots,R_{37}$
then there is a long-way vacant crossing of $T$ (this
is a well known technique in these kinds of proof). 
We shall arrange that they are all contained
within the band $\R \times [-12 \alpha,-2\alpha]$ and their
$\alpha$-neighbourhoods $(R_1)_\alpha,\ldots, (R_{37})_\alpha)$
all lie within the lower half of the region 
$S_{10\alpha} := (S(10\alpha))_{10 \alpha}$.

Here are the details.
Let $R_1,R_2,\ldots,R_{19}$ be horizontal $10 \alpha \times \alpha$
rectangles centred on  $(-45 \alpha,-4 \alpha),(-40\alpha, -4\alpha),
\ldots,(45 \alpha,-4\alpha)$
respectively. Let $R_{20},\ldots,R_{37}$ be vertical $\alpha \times 10 \alpha$
rectangles centred at 
 $(-42.5\alpha,-7 \alpha), (-37.5\alpha,-7\alpha),
\ldots, (42.5,-7\alpha) $ 
respectively.

Similarly, we define a collection $R_{38},\ldots,R_{74}$
of $10 \alpha \times \alpha $ and $\alpha \times 10  \alpha$
rectangles, such that if each of these has a long-way vacant
crossing then there is a long-way vacant crossing of $\tilde{T}$.
Each rectangle $R_{37+i}$, $1 \leq i \leq 37$,  is defined 
simply as the reflection  of $R_i$ in the $x$-axis. 

For $1 \leq i \leq 74$,
 let $D_i$  be the disk of radius $10^6\alpha$
 with the same centre as $R_i$.
Let $A_i$  denote the event that  there exists a grain of the Boolean model
that intersects $R_i$
and has its centre in the region
 $D_i   \setminus (R_i)_{\alpha}$.
 Let $B_i $ denote the event that the rectangle $R_i$ can be crossed the short
way in the union of grains
that are centred inside $(R_i)_\alpha$.
If $R_i$ is crossed the short way in the union of grains centred
in $D_i$, then $A_i \cup B_i$ must occur.

Suppose $F_\la(10r)$ occurs, i.e. there is a
 short-way occupied crossing of $S$,
using grains centred in $S_{10\alpha}$.
Then there is no long-way vacant crossing of $S$, and hence 
 no long-way vacant crossing either of $T$ or of $\tilde{T}$.
Hence
\bea
F_\la(10r) & \subset & \cup_{(i,j) \in \{1,\ldots,37\}^2}
\left(
(A_i \cup B_i)\cap(A_{37+j} \cup B_{37+j}) \right)
\nonumber \\
 & \subset &
\left(
\cup_{i=1}^{74}A_i 
\right) \cup \left(
\cup_{(i,j) \in \{1,\ldots,37\}^2} 
(B_i \cap B_{37+j})\right).
\label{0606a}
\eea
%
For $i,j \in \{1,\ldots,37\}$,
 since $(R_i)_\alpha \cap (R_j)_\alpha = \emptyset$
the events $B_i$ and $B_{37+j}$ are independent.
Hence by (\ref{0606a}) and the union bound
we have (\ref{0530a}), taking $C_1 = 37^2$. 
\end{proof}

\begin{lemm}
\label{lemF}
Suppose $\E[\rho^2]< \infty$. Let $\lambda_0 \in (0,\infty)$.
Then
\bea
\sum_{n \geq 1} \sup_{\lambda \in (0,\lambda_0] }
 \Pr[G_{\lambda} (10^n)] < \infty.
\label{0530b}
\eea
\end{lemm}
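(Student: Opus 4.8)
The event $G_\lambda(10^n)$ says that some grain centred in the annular-type region $B(10^{6+n}) \setminus S(10^n)_{10^n}$ nonetheless reaches into the strip $S(10^n)$. For this to happen, the grain's radius must be at least the distance from its centre to $S(10^n)$, and since the centre lies outside the $10^n$-neighbourhood of $S(10^n)$, that distance is at least $10^n$. So I would first bound $\Pr[G_\lambda(10^n)]$ using Mecke's formula (the Campbell--Mecke / Slivnyak formula for Poisson processes): the expected number of such grains is
\[
\lambda \int_{B(10^{6+n}) \setminus S(10^n)_{10^n}} \Pr\!\big[\rho \geq \operatorname{dist}(x, S(10^n))\big]\, dx,
\]
and $\Pr[G_\lambda(10^n)]$ is at most this quantity. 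Since $\lambda \le \lambda_0$, it suffices to show the integral (with $\lambda_0$ out front) is summable in $n$.

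**Main estimate.** Write $\ell = 10^n$ and let $t(x) = \operatorname{dist}(x, S(\ell))$ for $x$ in the region of integration; note $t(x) \geq \ell$ there. Bounding the strip crudely by a ball, $\Pr[\rho \geq t(x)]$ depends only (roughly) on how far $x$ is from the strip, so I would pass to the ``distance'' variable: the set of $x$ in $\R^2$ with $s \le t(x) \le s+ds$ and within distance $10^6 \ell$ of $o$ has area $O((\ell + s)\, ds)$ (the boundary of $S(\ell)$ has length $O(\ell)$, and an $s$-neighbourhood of it contributes perimeter $O(\ell + s)$). Hence
\[
\int_{B(10^{6}\ell)\setminus S(\ell)_{\ell}} \Pr[\rho \geq t(x)]\, dx
\;\leq\; C \int_{\ell}^{\infty} (\ell + s)\, \Pr[\rho \geq s]\, ds
\;\leq\; C' \int_{\ell}^{\infty} s\, \Pr[\rho \geq s]\, ds,
\]
using $\ell \le s$ on the range of integration. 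Summing over $n$ with $\ell = 10^n$, the geometrically-spaced tails $\int_{10^n}^\infty s\,\Pr[\rho \ge s]\,ds$ are dominated (up to a constant depending only on the ratio $10$) by $\int_0^\infty s\,\Pr[\rho \ge s]\,ds = \tfrac12 \E[\rho^2] < \infty$. More carefully: $\sum_{n\ge 1}\int_{10^n}^\infty s\Pr[\rho\ge s]\,ds = \int_0^\infty s\Pr[\rho\ge s]\,\#\{n\ge 1: 10^n \le s\}\,ds \le \int_0^\infty s\Pr[\rho\ge s]\,(1+\log_{10} s)\,ds$, which is finite provided $\E[\rho^2\log\rho]<\infty$ --- so I must be a little sharper to get by with only $\E[\rho^2]<\infty$.

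**Getting the moment condition exactly right.** The slack is that I discarded the cutoff at radius $10^6\ell$: a grain centred near distance $s$ from the strip can only be counted at \emph{scales} $n$ with $10^n \le s \le 10^{6+n}$, i.e.\ for at most $6$ values of $n$ (since the scales are powers of $10$ and the window $[s \cdot 10^{-6}, s]$ spans a factor $10^6$). So in the interchange of sum and integral, the counting function $\#\{n : 10^n \le t(x) \le 10^{6+n}\}$ is bounded by the \emph{constant} $7$, not by $1 + \log_{10} s$. Concretely,
\[
\sum_{n\ge 1}\Pr[G_\lambda(10^n)] \;\le\; \lambda_0 \int_{\R^2} \Pr[\rho \ge t(x)]\cdot \#\{n : 10^n \le t(x) \le 10^{6+n}\}\, dx \;\le\; 7\lambda_0 \int_{\R^2}\Pr[\rho \ge t(x)]\,dx,
\]
and since $t(x)$ behaves like $\operatorname{dist}(x,o)$ up to an additive $O(\ell)$... but wait, there is no single $\ell$ now. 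Instead I keep the geometry fixed: $t(x) \ge \tfrac12\|x\|$ for $\|x\|$ large (the strip $S(\ell)$ has its own $\ell$, but once we sum, the relevant $\ell$ for a given $x$ is comparable to $\|x\|$). The clean way: for each $x$, the innermost scale with $10^n \le t(x)$ has $10^n \asymp t(x) \asymp \|x\|$ (up to bounded factors), so $\int_{\R^2}\Pr[\rho\ge c\|x\|]\,dx = C\int_0^\infty u\,\Pr[\rho \ge cu]\,du = C'\E[\rho^2] < \infty$. This gives \eqref{0530b}.

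**Main obstacle.** The delicate point is exactly this last bookkeeping: naively summing the tail integrals costs a logarithmic factor and would only yield the result under $\E[\rho^2\log\rho]<\infty$ (the condition in \cite{ATT}). The role of the outer cutoff $B(10^6\alpha)$ in the definition of $G_\lambda(\alpha)$ is precisely to kill that logarithm --- each grain is "visible" to only $O(1)$ scales --- and the proof must exploit it. I would therefore organize the argument as: (i) Mecke bound on $\Pr[G_\lambda(10^n)]$; (ii) sum over $n$ first, interchanging with the spatial integral by Tonelli; (iii) observe the scale-count is bounded by $7$ because consecutive admissible scales differ by a factor $10$ and the annulus has aspect ratio $10^6$; (iv) convert the remaining $\int_{\R^2}\Pr[\rho\ge \Theta(\|x\|)]\,dx$ into $\Theta(\E[\rho^2])$ by polar coordinates. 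Everything except step (iii) is routine.
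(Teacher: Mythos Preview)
Your argument can be made to work, but it is considerably more elaborate than the paper's, and the exposition as written has a notational gap: you write $t(x)$ inside the sum over $n$ as if it were a single function, but $t(x)=\operatorname{dist}(x,S(10^n))$ depends on $n$. The correct version of your displayed inequality is
\[
\sum_{n\ge 1}\Pr[G_\lambda(10^n)]
\;\le\;\lambda_0\int_{\R^2}\sum_{n\ge 1}\mathbf{1}[x\in A_n]\,\Pr\big[\rho\ge t_n(x)\big]\,dx,
\]
with $A_n=B(10^{6+n})\setminus S(10^n)_{10^n}$ and $t_n(x)=\operatorname{dist}(x,S(10^n))$. For $x\in A_n$ one has $t_n(x)\ge 10^n\ge \|x\|/10^6$, and the number of admissible $n$ is $O(1)$; together these give $\le C\lambda_0\int_{\R^2}\Pr[\rho\ge \|x\|/10^6]\,dx=C'\lambda_0\E[\rho^2]$. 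So your scale-counting idea is sound once the $n$-dependence is tracked.

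The paper, however, bypasses all of this. It simply observes that any grain contributing to $G_\lambda(\alpha)$ has its centre in $B(10^6\alpha)$ and radius exceeding $\alpha$, so by Markov's inequality
\[
\Pr[G_\lambda(\alpha)]\;\le\;\lambda\,\pi(10^6\alpha)^2\,\Pr[\rho>\alpha]
\;=\;10^{12}\pi\,\lambda\,\alpha^2\,\Pr[\rho^2>\alpha^2],
\]
and then $\sum_{n\ge 1}100^n\Pr[\rho^2>100^n]<\infty$ follows immediately from $\E[\rho^2]<\infty$. In other words, the outer cutoff $B(10^6\alpha)$ is already fully exploited by the trivial product bound (area times tail), with no need for Tonelli, co-area, or scale counting. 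Your detour through $\int_\ell^\infty s\,\Pr[\rho\ge s]\,ds$ discarded the cutoff and then had to recover it; the paper never discards it in the first place.
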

\begin{proof}
Given $\lambda,\alpha >0$,
 if $G_\lambda(\alpha)$ occurs 
then there exists a point $y_k \in \Po_\lambda  \cap B(10^6 \alpha) \setminus
S( \alpha)_\alpha$ with associated radius $\rho_k > \alpha$.
Therefore by Markov's inequality $\Pr[G_\lambda(\alpha)]$
is bounded above by the expected number of such points $y_k$.
Therefore
$$
\Pr[G_\la(r)] \leq \la \pi (10^6\alpha)^2 \Pr[\rho > \alpha]
= 10^{12} \la \pi 
\alpha^2 \Pr[\rho^2 > \alpha^2].
$$   
Hence,
\bean
\sum_{n \geq 1} \sup_{\lambda \in (0,\lambda_0] }
 \Pr[G_{\lambda} (10^n)] 
\leq 10^{12} \lambda_0 \pi
 \sum_{n=1}^\infty 100^n \Pr[\rho^2 > 100^n]
\eean
which is finite because we  assume $ \E[\rho^2]< \infty$.
\end{proof}
\begin{lemm}
\label{lem0601}
Suppose $\E[\rho^2]< \infty$. 
 Then there exist $b >0$ and $\lambda >0$ such that
\bea
\sum_{n=1}^\infty (\Pr[F_\lambda(10^nb) ] + \Pr[G_\lambda(10^nb) ])
\leq 1/2.
\label{0601d}
\eea 
\end{lemm}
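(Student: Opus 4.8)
The plan is to combine the recursion in Lemma \ref{LemC1} with the summability in Lemma \ref{lemF} and a choice of small $\lambda$ to kill the initial term of the recursion. First I would fix the notation: write $a_n := C_1 \Pr[F_\lambda(10^n)]$ and $b_n := C_1 \Pr[G_\lambda(10^n)]$, where $C_1 \geq 1$ is the constant from Lemma \ref{LemC1}. Multiplying (\ref{0530a}) through by $C_1$ gives $a_{n+1} \leq a_n^2 + b_n$ for every $n \geq 0$ (reading (\ref{0530a}) with $\alpha = 10^n$), so the sequence $(a_n)$ satisfies a contracting quadratic recursion perturbed by the summable sequence $(b_n)$. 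By Lemma \ref{lemF} applied with $\lambda_0 = 1$ (say), $\sum_{n \geq 1} \sup_{\lambda \le 1} b_n < \infty$; hence there exists $n_0$ such that $\sum_{n \ge n_0} \sup_{\lambda \le 1} C_1 \Pr[G_\lambda(10^n)] < 1/4$. Note $b_n \le 1/4$ for all $n \ge n_0$ in particular.

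Next I would show that if $a_{n_0}$ is small enough then $a_n$ stays small for all $n \ge n_0$ and $\sum_{n \ge n_0} a_n$ is itself small. Concretely, suppose $a_{n_0} \le 1/4$ and $\sum_{n \ge n_0} b_n \le 1/4$. I claim $a_n \le 1/2$ for all $n \ge n_0$: inductively, if $a_n \le 1/2$ then $a_{n+1} \le a_n^2 + b_n \le \tfrac14 + \tfrac14 = \tfrac12$. But one needs the sum of the $a_n$ to be small, not just each term, so I would sharpen this: if moreover $a_{n_0} \le \eps$ for a suitably small $\eps$, then using $a_{n+1} \le a_n^2 + b_n \le \tfrac12 a_n + b_n$ (valid once $a_n \le 1/2$), a telescoping/geometric-series estimate gives $\sum_{n \ge n_0} a_n \le 2a_{n_0} + 2\sum_{n \ge n_0} b_n$, which can be made $\le 1/4$ by shrinking $\eps$ and, if necessary, enlarging $n_0$ so that $\sum_{n \ge n_0} b_n \le 1/16$. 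Then $\sum_{n \ge n_0}(a_n + b_n) \le 1/4 + 1/16 < 1/2$, and since $\Pr[F_\lambda(10^n)] \le a_n$ and $\Pr[G_\lambda(10^n)] \le b_n$ (because $C_1 \ge 1$), setting $b := 10^{n_0}$ yields (\ref{0601d}).

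The remaining point — and the one genuine ingredient still needed — is to get the recursion started, i.e.\ to show that for the fixed $n_0$ above one can choose $\lambda > 0$ small enough that $\Pr[F_\lambda(10^{n_0})] \le \eps/C_1$. This is where I would use that $S(10^{n_0})$ is a fixed bounded rectangle and $F_\lambda(10^{n_0})$ is defined using only grains centred in the bounded region $S(10^{n_0})_{10^{n_0}}$: as $\lambda \downarrow 0$, with probability tending to $1$ there are no Poisson points at all in this bounded region (the expected number is $\lambda \,|S(10^{n_0})_{10^{n_0}}| \to 0$), so the occupied set restricted to these grains is empty and certainly does not cross $S(10^{n_0})$ the short way. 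Hence $\Pr[F_\lambda(10^{n_0})] \to 0$ as $\lambda \to 0$, and we may pick $\lambda$ with $\Pr[F_\lambda(10^{n_0})] \le \eps/C_1$ while also $\lambda \le 1$ so that Lemma \ref{lemF}'s bound applies.

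The main obstacle is purely bookkeeping: choosing the thresholds ($\eps$, $n_0$, then $\lambda$) in the right order so that the quadratic recursion argument closes and the final sum is genuinely below $1/2$ rather than merely bounded; one must fix $n_0$ large first (using only $\E[\rho^2] < \infty$ via Lemma \ref{lemF}), then fix $\eps$ small depending on the tail $\sum_{n \ge n_0} b_n$, and only then send $\lambda \to 0$. No new probabilistic idea beyond the two preceding lemmas is required.
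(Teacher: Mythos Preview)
Your approach is essentially the same as the paper's: rescale by $C_1$ to turn Lemma~\ref{LemC1} into a quadratic recursion, use Lemma~\ref{lemF} to choose the starting scale $b=10^{n_0}$ so the $G$-tail is small, take $\lambda$ small to make the initial $F$-term small, bootstrap to a uniform bound $a_n\le\text{const}$, linearize $a_n^2\le\text{const}\cdot a_n$, and sum the resulting geometric-plus-perturbation series. The paper does exactly this with $C_2=9$ in place of your $1/2$.

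One slip in the bookkeeping you flagged as the main obstacle: multiplying (\ref{0530a}) by $C_1$ gives $a_{n+1}\le a_n^2 + C_1^2\Pr[G_\lambda(10^n)]$, not $a_{n+1}\le a_n^2 + b_n$ with your $b_n=C_1\Pr[G_\lambda(10^n)]$; you are missing a factor of $C_1$. The fix is to set $b_n:=C_1^2\Pr[G_\lambda(10^n)]$ (this is precisely why the paper defines $g_\lambda(\alpha):=C_1^2\Pr[G_\lambda(\alpha/10)]$), after which everything you wrote goes through unchanged since Lemma~\ref{lemF} still makes $\sum b_n$ finite.
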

\begin{proof}
Let $C_1 \geq 1$ be as in Lemma \ref{LemC1}.
Given $\lambda,\alpha >0$   we define
$$
f_{\lambda}(\alpha) := C_1 \Pr[F_{\lambda}(\alpha)];~~~~~
g_{\lambda}(\alpha) := C_1^2 \Pr[G_{\lambda}(\alpha/10)].
$$
Then by (\ref{0530a}) we have
\bea
f_{\lambda}(\alpha) \leq  C_1^2 ( \Pr[F_{\lambda}( \alpha/10)]^2
+ \Pr[G_\la( \alpha/10)] )
\nonumber \\
= f_{\lambda}(\alpha/10)^2 + g_{\lambda}(\alpha) .
\label{0530d}
\eea
Let $C_2=9$.  Using
(\ref{0530b}), we can choose $b$ to be a big enough power of 10
so that 
for all $ \lambda \in (0 ,1]$, we have
\bea
\sum_{n=1}^\infty g_{\lambda}(10^nb) \leq C_2^{-2}.
\label{0530f}
\eea
Now fix this $b$.  Choose $\lambda \leq 1$ to be small enough
 so that $f_{\lambda}(b) \leq C_2^{-1}$.  
Using (\ref{0530d}) repeatedly,
 we have $f_{\lambda}(10^n b) \leq C_2^{-1} $ for all  $n$.
Then using (\ref{0530d}) repeatedly again,
we have for $n \in \N$ that
\bean
f_{\lambda}(10^nb) 
& \leq & \frac{f_\lambda(10^{n-1}b)}{C_2} + g_\lambda(10^{n} b)  \leq \cdots 
\\
& \leq & C_2^{-n-1} + \frac{g_{\lambda}(10b)}{C_2^{n-1}} + \frac{g_{\lambda}(100b)}{C_2^{n-2}}
+ \cdots + \frac{g_{\lambda}(10^nb)}{C_2^0},
\eean
and therefore
\bean
\sum_{n \geq 1} f_{\lambda}(10^nb) \leq (C_2^{-2} + g_{\lambda}(10b)+
 g_{\lambda}(100b) + g_{\lambda}(1000b)+\cdots)
\\
\times(1 + C_2^{-1}+ C_2^{-2} + \cdots)
\eean 
so by 
  (\ref{0530f}) and the fact that $\sum_{k=0}^\infty C_2^{-k} \leq 2$,
we have
\bean
\sum_{n \geq 1} ( \Pr[ F_{\lambda}(10^nb) ] + \Pr[G_{\lambda}(10^nb) ] )
\leq \sum_{n\geq 1} (f_{\lambda}(10^nb) + g_{\lambda}(10^nb))
\\
\leq 2C_2^{-2} + 3 \sum_{n \geq 1} g_\lambda(10^n b)
 \leq 5 C_2^{-2},
\eean
and hence (\ref{0601d}).
\end{proof}

\section{Proof of Theorem \ref{thm:main}}
\label{secmainpf}
 We can now complete the proof of Theorem \ref{thm:main}. 
We assume from now on that
 \bea
\E[\rho^d ] < \infty.
\label{dmom}
\eea

Consider first the case with $d=2$. 
Let $b$ and $\lambda$ be as given in Lemma \ref{lem0601}.

Let $S_1,S_2,S_3,\ldots$
be a sequence of `strips', i.e. closed  rectangles of aspect
ratio 10, with successive lengths (the short way) $ 10b, 100b, 1000b, \ldots$
alternating between horizontal and vertical strips with
each strip $S_n$ centred at the origin. Then  each
strip
 $S_n$ crosses the next one $S_{n+1}$ the short way.

For each $n \in \N$, define the events
$$
H_n: = \{ S_n  {\rm~ is~   crossed~ by~ }
\xi_\lambda^{S_{n+2}}
 {\rm ~the~ short ~way} \};
$$
$$
J_n := \{ \xi_{\lambda}^{S_{n+4} \setminus S_{n+2} }  \cap S_n 
\neq \emptyset \}.
$$
\begin{lemm}
\label{lemifnone}
 If none of the events $H_1,J_1,H_2,J_2,\ldots$ occurs then
$\xi^*_\lambda$ percolates.
\end{lemm}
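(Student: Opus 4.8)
The plan is to show that if none of $H_1,J_1,H_2,J_2,\ldots$ occurs, then we can produce an unbounded connected component of the vacant set $\xi^*_\lambda$ by nesting vacant crossings of the successive strips $S_n$. First I would observe that the failure of $H_n$ means there is \emph{no} short-way occupied crossing of $S_n$ using grains centred in $S_{n+2}$; by the standard planar duality for crossings of a rectangle (see e.g. \cite{MR}), this forces the existence of a long-way \emph{vacant} crossing $\gamma_n$ of $S_n$ \emph{within} $S_n$, provided we only worry about grains centred in $S_{n+2}$. The failure of $J_n$ then guarantees that grains centred in $S_{n+4}\setminus S_{n+2}$ do not reach $S_n$ at all, so in fact the curve $\gamma_n$ is vacant with respect to \emph{all} grains centred in $S_{n+4}$. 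We still need to rule out grains centred outside $S_{n+4}$; here I would argue that such a grain can only intersect $S_n$ if its radius exceeds roughly the gap between $\partial S_{n+4}$ and $S_n$, and this is controlled by the chain of failed $J_m$ events for larger $m$ — more carefully, a grain centred in $S_{m+4}\setminus S_{m+2}$ that reaches $S_n \subset S_m$ would witness $J_m$, and a grain centred outside every $S_{m+4}$ is accounted for in the limit. So the conclusion is that, on the complement of all the $H_n$ and $J_n$, every $S_n$ contains a genuine long-way vacant crossing $\gamma_n$ of $\xi^*_\lambda$.

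Next I would exploit the geometry: since the strips alternate horizontal/vertical and each $S_n$ has aspect ratio $10$ and is centred at $o$, a long-way crossing of $S_n$ (say horizontal) and a long-way crossing of $S_{n+1}$ (vertical) necessarily intersect — $S_n$ spans the full width of $S_{n+1}$'s short direction and vice versa, so their long-way crossings must meet inside $S_n \cap S_{n+1}$. Hence $\gamma_n \cup \gamma_{n+1}$ is connected, and inductively $\bigcup_{n\geq 1}\gamma_n$ is a connected subset of $\xi^*_\lambda$. Since $\gamma_n$ has Euclidean diameter at least the long side of $S_n$, which is $10^{n+1} b \to \infty$, this union is unbounded. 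Therefore $\xi^*_\lambda$ percolates.

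The main obstacle is the first step: pinning down precisely \emph{which} grains are allowed to be used when invoking planar crossing duality, and confirming that the complement of $H_n$ together with $J_n$ (and implicitly the larger-scale $J_m$) really does yield a crossing curve that is vacant in the \emph{full} model rather than merely in a restricted model. The duality statement ``no occupied short-way crossing implies a vacant long-way crossing'' is clean when one fixes a single Boolean configuration, but here the configuration defining $H_n$ is $\xi_\lambda^{S_{n+2}}$ while the configuration we ultimately care about is $\xi_\lambda$; bridging this requires exactly the observation that, off the events $\bigcup_m J_m$, no grain centred outside $S_{n+2}$ can touch $S_n$, so the two configurations agree on $S_n$. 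I would therefore want to state and use the duality in the form: for a fixed closed set $C \subset \R^2$, either $C$ has an occupied short-way crossing of the rectangle or its complement has a vacant long-way crossing, applied with $C = \xi_\lambda \cap S_n$. Once that bookkeeping is done, the geometric nesting argument is routine.
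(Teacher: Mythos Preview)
Your proposal is correct and follows essentially the same approach as the paper's proof. The paper orders the two ingredients slightly differently---it first proves, via the chain of failed $J_m$, that no grain centred outside $S_{n+2}$ can meet $S_n$, and only then applies planar crossing duality once to the full configuration $\xi_\lambda \cap S_n$---but the logical content and the geometric concatenation of the $\gamma_n$ are identical to what you describe.
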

\begin{proof}
 Suppose none of the events $H_1,J_1,H_2,J_2,\ldots$ occurs.

We claim for each $n \in \N$ that 
$\xi^{\R^2 \setminus S_{n+2}}_{\lambda} \cap S_n = \emptyset$. Indeed,
if  
$\xi^{\R^2 \setminus S_{n+2}}_{\lambda} \cap S_n \neq \emptyset$ then for
some integer  $m \geq n +2$ with $m-n$ even
 we have 
$\xi^{S_{m+2}  \setminus S_{m}}_{\lambda} \cap S_n \neq \emptyset$,
and then since $n \leq m$ we also have 
$S_n \subset S_m$ so that
$\xi^{S_{m+2}  \setminus S_{m}}_{\lambda} \cap S_m \neq \emptyset$,
contradicting the assumed non-occurrence of $J_{m-2}$.

For each $n$,
by the assumed non-occurrence of $H_n$ along with the preceding claim
there   is no short-way crossing of $S_n$ by $\xi_\lambda$ so
there is a long-way crossing of $S_n$ by $\xi_{\lambda}^*$, i.e.
a path $\gamma_n \subset S_n \cap \xi_\lambda^*$ that crosses
$S_n$ the long way. 

Then for
each $n$ we have $\gamma_n \cap \gamma_{n+1} \neq \emptyset$,
so $\cup_n \gamma_n$ is an unbounded connected set contained in
$\xi_{\lambda}^*$. Therefore $\xi_\lambda^*$ percolates.  
\end{proof}

\begin{proof}[Proof of Theorem \ref{thm:main}]

Suppose $d=2$.
Let $\lambda$ and $b$ be as given in Lemma \ref{lem0601}.
Recall  the definition of events $F_\lambda(\alpha)$ and
$G_\lambda(\alpha)$ at (\ref{Gdef}). 
We claim now for each $n$ that  
\bea
\Pr[H_n \cup J_n] \leq \Pr[ F_\la(10^nb)] + \Pr[G_\la(10^nb)].
\label{0605a}
\eea
Indeed, suppose the parity of $n$ is such that $S_n$ is
horizontal. Then, in terms of earlier notation, $S_n= S(10^nb)$.
Since $S_{n+4} \subset B(10^{6+n}b)$
we have $J_n \subset G_\la(10^n b)$ and
 $H_n \subset F_\la(10^n b)  \cup G_\la(10^n b)$.
Then (\ref{0605a}) follows from the union bound.

 Using first Lemma \ref{lemifnone}, then
(\ref{0605a}), 
and finally (\ref{0601d}),
we have
\bean
1 - \Pr[U^*_\lambda ]
 & \leq & \Pr[\cup_{n=1}^\infty (H_n \cup J_n)]  
\\
& \leq  & \sum_{n=1}^\infty 
(\Pr[ F_{\lambda} (10^n b) ] 
+ \Pr[ G_{\lambda} (10^n b) ] ) \leq 1/2. 
\eean
Therefore by ergodicity
 $\Pr[U^*_\lambda]=1$ so $\lambda \leq \lambda_c^*$. Hence
we have $\lambda_c^* >0$ as required.

%
Now suppose $d \geq 3$. Let $\txi_\la$, be  the intersection
of $\xi_\la$  with the two-dimensional
subspace $\R^2 \times \{o''\}$ of $\R^d$, 
where $o''$ denotes the origin in $\R^{d-2}$.

Let $\omega_{d-2}$ denote the volume of the unit ball
in $\R^{d-2}$. It can be seen that
$\txi_\lambda$
 is a two-dimensional
Boolean model with intensity 
\bean
\lambda \omega_{d-2} (d-2)
 \int_0^\infty \Pr[\rho \geq r] r^{d-3}dr
=
\lambda \omega_{d-2} 
\E[\rho^{d-2} ] =: \lambda',
\eean
which is finite by our assumption (\ref{dmom}).
Moreover if $\sigma$ denotes a random variable with the
radius distribution in this planar Boolean model we claim that
$\E[\sigma^2] < \infty$.   This
can be demonstrated by a computation, but it is more quickly
seen using the fact that, since  
$\Pr[o \in \xi_\lambda]<1$ for the original
Boolean model by 
(\ref{dmom}), 
 also $\Pr[o \in \txi_\lambda] <1$, which would
not be the case if $\E[\sigma^2]$ were infinite.

 Therefore by the
two-dimensional case already considered, for small enough $\lambda >0$
we have $\lambda'$ small enough so that
 the complement
(in the space $\R^2 \times \{o''\}$)
of 
 $\txi_\lambda$
percolates. Hence $\xi_\lambda^*$ percolates for
small enough $\lambda >0$, so $\lambda^*_c > 0$.
\end{proof}

\section{Proof of Theorem \ref{thm:sharp}}
\label{secsharp}
As mentioned in Section \ref{SecIntro}, if $\E[\rho^d] =\infty$ then
$\lambda_c = \lambda_c^*=0$, so 
without loss of generality we assume (\ref{dmom}).

First suppose  $d=2$. We need to prove that $\lambda_c^* = \lambda_c$.

Suppose $\lambda > \lambda_c$. 
Let $V_\lambda^*$ be the event
that there is an unbounded component of $\xi_\la^*$ intersecting with
 $B(1)$.
For $n \in \N$, set $Q(n) := [-n,n]^2$.  Let
 $E(n)$ be the event that there exists a path in 
$\xi_\la^* $ from $Q(n)$ to
 $\R^2 \setminus Q(3 n )$.

The annulus $Q(3n) \setminus Q(n)$ can be written as the union
of two $3n \times n$ and two $n \times 3n$
 rectangles, and if $\xi_\la$ crosses each of these four rectangles
 the long way then $Q(n)$ is  surrounded
by an  occupied circuit contained in $Q(3n)$ so  $E(n)$ does not occur.  
Hence by Theorem 1.1 (i) of \cite{ATT} and the union bound, $\Pr[E(n)] \to 0$
as $n \to \infty$.  Since 
$
V^*_\lambda \subset \cap_{n =1}^\infty E(n),
$
we therefore have $\Pr[V^*_\la] =0$ and hence $\Pr[U_\la^*]=0$. 
Hence $\lambda \geq \lambda_c^*$ so $\lambda_c^* \leq \lambda_c$. 

Now suppose $\lambda < \lambda_c$. Then by Theorem 1.1(iii) of 
\cite{ATT}, in the proof of our Lemma \ref{lem0601} we can choose 
$b$ large enough 
so that we have both (\ref{0530f}), and the inequality
 $f_\lambda(b) < C_2^{-1}$.
Then the rest of the proof of Lemma \ref{lem0601} carries through
for this $(b,\lambda)$, 
 so the conclusion of
  Lemma \ref{lem0601}  holds for this $(b,\lambda)$.
 Then the proof
(for $d=2$)
 in Section \ref{secmainpf} 
 works for this $(b,\lambda)$, showing that
 $\lambda \leq \lambda^*_c$
 for any $\lambda < \lambda_c$ 
and hence that 
$\lambda_c^* \geq \lambda_c$.
Thus $\lambda_c^* = \lambda_c$ for $d=2$.

Now suppose that $d \geq 3$ and $\lambda < \lambda_c$.
 Then as discussed in Section \ref{secmainpf},
 $Z_\la \cap (\R^{d-2} \times \{o''\})$  
is a two-dimensional Boolean model possessing no infinite
component, so  the radius distribution for this
two-dimensional Boolean model has finite second moment and
  the intensity $\lambda'$ of this two-dimensional Boolean
model is subcritical
(in fact, strictly subcritical since we can repeat the argument
for any $\lambda_1 \in (\lambda,\lambda_c)$).
 Therefore by the argument just given for
$d=2$,
the complement (in $\R^{d-2} \times \{o''\}$) of this
Boolean model percolates, and  therefore 
the original $\xi^*_\lambda$ also  percolates so $\lambda \leq \lambda_c^*$.
Hence $\lambda_c^* \geq \lambda_c$, and the proof is complete.

\section{Alternative proof of (\ref{eq:D})}
\label{secfirstpf}


We divide the nonnegative $x$-axis into unit intervals $I_0,I_1,I_2,\ldots$
where $I_k = [k,k+1) \times \{o'\}$ (here $o'$ is
the origin in $\R^{d-1}$).
For each $k \in \N$ let $W_{k,\lambda}$ be the union of $I_k$ and
all components of $\xi_\lambda$ which intersect $I_k$.
%
%
\begin{lemm}
\label{lemGouere}
If $0< \lambda < \lambda_D$,
then $\E[D(W_{0,\lambda})] < \infty$.
\end{lemm}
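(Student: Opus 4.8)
The plan is to compare $W_{0,\lambda}$ with the component $W_\lambda$ of the origin and exploit that $\lambda_D$ is defined precisely as the threshold for finiteness of $\E[D(W_\lambda)]$. Fix $\lambda < \lambda_D$, so $\E[D(W_\lambda)] < \infty$ by definition of $\lambda_D$. The issue is that $W_{0,\lambda}$ attaches to the whole unit interval $I_0 = [0,1) \times \{o'\}$, not just to a single point, so it is not literally $W_\lambda$; we need to control the extra contribution from points of $I_0$ other than $o$.

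First I would set up a covering/union-bound argument. Any component of $\xi_\lambda$ meeting $I_0$ either contains a point of $I_0$ in the occupied region, in which case it is the component $W_{x,\lambda}$ of that point $x \in I_0 \cap \xi_\lambda$, or it reaches $I_0$ only through a grain centred elsewhere. In all cases, writing $\cI$ for $I_0 \cap \xi_\lambda$ (possibly empty), one has the containment
\be
W_{0,\lambda} \subseteq I_0 \cup \bigcup_{x \in \cI} W(x),
\label{eq:Wcontain}
\ee
where $W(x)$ denotes the occupied component containing $x$; hence $D(W_{0,\lambda}) \leq 1 + \sum_{x \in \cI} D(W(x))$ is too crude (there can be many such $x$), so instead I would use $D(W_{0,\lambda}) \leq 2 + 2\sup_{x \in \cI}\big(\text{dist}(x, o) + D(W(x))\big) \leq 2 + 2\sup_{x \in I_0} D(W(x))$ when $\cI\neq\emptyset$, and $D(W_{0,\lambda}) \le 1$ otherwise. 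It therefore suffices to show $\E\big[\sup_{x \in I_0} D(W(x))\big] < \infty$, interpreting $D$ of a non-occupied point's ``component'' as $0$.

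Next I would handle this supremum over the unit interval. The natural device is a stationarity-plus-Markov argument: by translation invariance of the Poisson process, for each fixed $x$ the law of $D(W(x))$ equals that of $D(W_\lambda)$, so $\E[D(W(x))] = \E[D(W_\lambda)] =: c < \infty$. To pass from a fixed $x$ to the supremum over $I_0$, I would use a union bound over the tail: for $t > 0$,
\be
\Pr\Big[\sup_{x \in I_0} D(W(x)) > t\Big] \leq \Pr\big[\xi_\lambda \cap B(o, t/2) \text{ meets a component of diameter} > t/2\big],
\ee
and the right-hand side is in turn bounded, via another union bound over a covering of $B(o,t/2)$ by $O(t^d)$ unit balls together with stationarity, by $O(t^d)\,\Pr[D(W_\lambda) > t/2 - \sqrt d]$. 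Integrating over $t$, this is summable provided $\E[D(W_\lambda)^{d+1}] < \infty$ — which is more than we are assuming. So the crude union bound loses too much; instead I would argue more carefully that, since $W_{0,\lambda}$ and $W_\lambda$ differ only through grains meeting the bounded set $I_0$, one can write $W_{0,\lambda} \subseteq W_\lambda \cup \xi_\lambda^{\,I_0 \text{-reaching grains}}$ and bound the latter's diameter by $2\rho_{\max}$ over the finitely many grains covering $I_0$, whose $(d)$-th-moment sum is finite; combined with $\E[D(W_\lambda)] < \infty$ this gives the claim. The main obstacle is exactly this step: getting from the single-point finiteness $\E[D(W_\lambda)] < \infty$ to the interval-supremum finiteness without paying an extra power of $t$ in the union bound — the right resolution is to couple $W_{0,\lambda}$ directly to $W_\lambda$ (adding only a controlled, integrable correction from grains hitting $I_0$) rather than to take a naive union bound over all starting points.
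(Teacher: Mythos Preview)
Your proposal has a genuine gap. The containment you write at the end,
\[
W_{0,\lambda}\ \subseteq\ W_\lambda \cup \{\text{grains whose ball meets } I_0\},
\]
is false: a connected component of $\xi_\lambda$ that touches $I_0$ may consist almost entirely of grains that do not themselves meet $I_0$ (one grain touches $I_0$, and a long chain of grains runs off from it). If moreover the origin is vacant, $W_\lambda$ contributes nothing at all, so the ``integrable correction from grains hitting $I_0$'' cannot possibly control $D(W_{0,\lambda})$. Your earlier union-bound attempt already diagnosed the real difficulty correctly --- passing from a single point to the interval costs a factor $t^d$ in the tail --- but the proposed fix does not resolve it.

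The paper's proof sidesteps all of this with a one-line Harris--FKG trick. Let $F := \{I_0 \subset \xi_\lambda\}$; this is an increasing event with $0 < \Pr[F] < 1$. The functional $D(W_{0,\lambda})$ is increasing in the configuration, so FKG gives
\[
\E\bigl[D(W_{0,\lambda})\bigr]\ \le\ \E\bigl[D(W_{0,\lambda})\,\big|\,F\bigr].
\]
But on $F$ the whole interval $I_0$ lies in a single occupied component, namely $W_\lambda$, so $W_{0,\lambda} = W_\lambda$ on $F$ and hence
\[
\E\bigl[D(W_{0,\lambda})\,\big|\,F\bigr]\ =\ \E\bigl[D(W_\lambda)\,\big|\,F\bigr]\ \le\ \frac{\E[D(W_\lambda)]}{\Pr[F]}\ <\ \infty.
\]
The missing idea in your approach is precisely this: rather than taking a supremum over starting points in $I_0$ and paying for it with a union bound, one \emph{conditions} on the increasing event that forces all those starting points into the same component as the origin, and FKG lets you remove the conditioning for free.
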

\begin{proof}
Fix $\lambda \in (0, \lambda_D)$. Then $\E[D(W_\lambda)] <\infty$.
Let $F$ be the event that $I_0 \subset \xi_\lambda$,
and set $F^c:= \Omega \setminus F$. Then $0 <\Pr[F] <1$.
If $F$ occurs then $W_{0,\lambda} = W_\lambda$. Hence
by the Harris-FKG inequality (see \cite{MR} or \cite{LP}),
$$
\E[D(W_{0,\lambda})]
\leq
\E[D(W_{0,\lambda})|F] = \E[D(W_\la)|F] < \infty, 
$$
as required.
\end{proof}


Given $\lambda >0$, define the event
$$
E_\lambda:= \left( \cap_{k=2}^\infty \{ D(W_{k,\lambda}) \leq k/2 \}
\right) \cap
\{\xi_{\lambda} \cap ( I_0 \cup I_1) = \emptyset \}.
$$  

\begin{lemm}
\label{lemrope}
If $0 < \lambda < \lambda_D$, then
 $\Pr[E_{\lambda}] >0$.
\end{lemm}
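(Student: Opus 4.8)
The plan is to express $E_\lambda$ as a decreasing intersection of events each of which has positive probability, and then to show that the limiting probability stays bounded away from zero by a Borel--Cantelli-type estimate. Concretely, I would write $E_\lambda = F_0 \cap \bigcap_{k=2}^\infty G_k$, where $F_0 := \{\xi_\lambda \cap (I_0 \cup I_1) = \emptyset\}$ and $G_k := \{D(W_{k,\lambda}) \leq k/2\}$. Since $\Pr[o \in \xi_\lambda] < 1$ under (\ref{dmom}) (a standard consequence of $\E[\rho^d] < \infty$, used already in Section \ref{secmainpf}), the event $F_0$ — that no grain covers any point of the bounded set $I_0 \cup I_1$ — has positive probability: the expected number of grains whose ball meets $I_0 \cup I_1$ is finite, so with positive probability there are none. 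Thus $\Pr[F_0] > 0$.

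Next I would control the failures of the $G_k$. By Lemma \ref{lemGouere}, $\E[D(W_{0,\lambda})] < \infty$, and by translation invariance $\E[D(W_{k,\lambda})] = \E[D(W_{0,\lambda})] =: c < \infty$ for every $k$. Markov's inequality then gives $\Pr[D(W_{k,\lambda}) > k/2] \leq 2c/k$, which unfortunately is not summable. To get summability I would instead invoke the stronger tail information available: from $\E[D(W_\lambda)] < \infty$ for all $\lambda < \lambda_D$ one can actually extract, by a standard renormalization/comparison argument (cf. \cite{Gouere,MR,DCRT}), that $D(W_{0,\lambda})$ has a tail $\Pr[D(W_{0,\lambda}) > t]$ decaying fast enough — e.g. polynomially of degree $>1$, or even exponentially in the subcritical regime — so that $\sum_{k\geq 2} \Pr[D(W_{k,\lambda}) > k/2] < \infty$. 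Alternatively, and more in the spirit of self-containedness, one can use that $\lambda < \lambda_D$ implies $\lambda_1 < \lambda_D$ for some $\lambda_1 > \lambda$, and compare the diameter tails at intensity $\lambda$ against the expectation at intensity $\lambda_1$ to gain an extra moment; I expect this moment-boosting step to be the main obstacle, since it is where the definition of $\lambda_D$ via expectations rather than probabilities has to be leveraged.

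Granting summability, choose $k_0 \geq 2$ large enough that $\sum_{k \geq k_0} \Pr[D(W_{k,\lambda}) > k/2] < \Pr[F_0]/2$; here one should note the $W_{k,\lambda}$ for different $k$ are positively correlated with $F_0^c$, so a naive union bound does not immediately combine with $\Pr[F_0]$, and I would instead argue via the Harris--FKG inequality (as in Lemma \ref{lemGouere}) that conditioning on $F_0$ only decreases each $D(W_{k,\lambda})$ stochastically, so $\Pr[\bigcap_{k \geq 2} G_k \mid F_0] \geq 1 - \sum_{k\geq 2}\Pr[D(W_{k,\lambda}) > k/2 \mid F_0] \geq 1 - \sum_{k\geq 2}\Pr[D(W_{k,\lambda}) > k/2]$. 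Choosing the threshold in the renormalization so that this last sum is $< 1/2$ (enlarging the unit-interval decomposition to blocks of a fixed large length $L$ if necessary, which rescales the condition to $D(W_{k,\lambda}) \leq Lk/2$ and makes the tails as small as desired), we conclude $\Pr[E_\lambda] = \Pr[F_0]\,\Pr[\bigcap_{k\geq 2} G_k \mid F_0] > 0$, as required.
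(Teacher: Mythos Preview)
Your decomposition is the right one, but you have missed the single elementary observation that makes the whole proof immediate: finite first moment already gives summability of the tail probabilities. By translation invariance and the tail-sum representation of expectation,
\[
\sum_{k\geq 1} \Pr\bigl[D(W_{k,\lambda}) > k/2\bigr]
=\sum_{k\geq 1} \Pr\bigl[D(W_{0,\lambda}) > k/2\bigr]
\leq 2\,\E\bigl[D(W_{0,\lambda})\bigr] < \infty,
\]
the last step by Lemma~\ref{lemGouere}. Applying Markov termwise to get $2c/k$ throws away exactly this information; there is no need for exponential or polynomial tail decay, for renormalisation, or for any moment-boosting via an auxiliary $\lambda_1>\lambda$. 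The ``main obstacle'' you anticipate simply does not exist.

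With summability in hand, the paper's finish is also cleaner than your conditional-FKG route. One chooses $k_0$ with $\sum_{k\geq k_0}\Pr[G_k^c] < 1/2$, so $\Pr\bigl[\cap_{k\geq k_0} G_k\bigr]\geq 1/2$ by the union bound. The finitely many remaining constraints $G_2,\ldots,G_{k_0-1}$ are disposed of by passing to the stronger decreasing event $\{\xi_\lambda\cap(I_0\cup\cdots\cup I_{k_0})=\emptyset\}$, which has positive probability and forces $W_{k,\lambda}=I_k$ for those $k$. A single application of Harris--FKG to these two decreasing events then gives $\Pr[E_\lambda]>0$. Your block-length rescaling and your conditional use of FKG are not wrong in spirit, but they are workarounds for a difficulty that is not there.
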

\begin{proof}
Fix $\lambda \in (0,\lambda_D)$. Then by  Lemma \ref{lemGouere}.
$$
\sum_{k \geq 1} \Pr[ D(W_{k,\lambda}) > k/2] 
= \sum_{k \geq 1}
 \Pr[ D(W_{0,\lambda}) > k/2]  \leq \E[2D(W_{0,\lambda})] < \infty.
$$
Choose $k_0 \in \N$ with $k_0 >2$, such that 
$\sum_{k \geq k_0} \Pr[ D(W_{k,\lambda}) > k/2] < 1/2$.
 Then by the union bound and complementation, 
$\Pr[ \cap_{k=k_0}^\infty \{D(W_{k,\la}) \leq k/2\}] \geq 1/2$.
Moreover $\Pr[ \cap_{k=0}^{k_0} \{\xi_\la \cap I_k = \emptyset\}] >0.$
Hence by the  Harris-FKG inequality,
$$
\Pr [E_\la ] \geq \Pr \left[
\left( \cap_{k=k_0}^\infty \{D(W_{k,\la}) \leq k/2\} \right)
\cap
\left( \cap_{k=0}^{k_0} \{\xi_\la \cap I_k = \emptyset\} \right) 
\right] 
>0.
$$
\end{proof}

\begin{lemm}
\label{unilem}
Suppose that $A \subset \R^d$ is closed, connected and unbounded, and
 that
 $\R^d \setminus A$ has an unbounded connected component.
Then $\partial A$, the boundary of $A$, has an unbounded connected component.  
\end{lemm}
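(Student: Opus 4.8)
The plan is to work with the topological boundary $\partial A = \overline{A} \cap \overline{\R^d \setminus A} = A \cap \overline{\R^d \setminus A}$ (using that $A$ is closed), and to produce an unbounded connected subset of $\partial A$ by separating a carefully chosen unbounded piece of $A$ from an unbounded piece of its complement. Let $V$ be an unbounded connected component of $\R^d \setminus A$. The key idea is this: fix a point $a_0 \in A$ and a point $v_0 \in V$, and for each large radius $R$ consider the sphere $\partial B(o,R)$; for $R$ large enough both $A$ and $V$ meet $\R^d \setminus B(o,R)$ (since both are unbounded and closed/open respectively), and we want to find a point of $\partial A$ outside $B(o,R)$. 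That gives a sequence of points of $\partial A$ escaping to infinity; the remaining work is to glue them into a single connected unbounded subset.

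Concretely, the main step I would carry out is a separation/connectedness argument along the lines of the standard fact that in $\R^d$ the boundary of an open connected set separates it from the rest of space. First I would show $\partial A$ is nonempty and, more importantly, that for every $R>0$ the set $\partial A \setminus B(o,R)$ is nonempty: pick a point $a \in A$ with $\|a\| > R+1$ and a point $v \in V$ with $\|v\| > R+1$; since $A$ is unbounded and closed while its complement has the unbounded component $V$, one can join a far-out point of $A$ to a far-out point of $V$ by a path staying outside $B(o,R)$ — e.g. along a large sphere or through the (connected, hence path-connected since it is open) complement region near infinity — and along that path one must cross $\partial A$. This shows $\partial A$ has points arbitrarily far from the origin. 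Then I would invoke the topological fact that a closed subset of $\R^d$ which is ``locally separating'' in this way has an unbounded connected component: more precisely, I would let $K$ be the connected component of $\partial A$ containing a fixed boundary point $x_0$, and argue by contradiction that if $K$ were bounded, say $K \subset B(o,R_0)$, then since $\partial A$ is closed one could find an open set $O$ with $K \subset O \subset \overline{O} \subset B(o,R_0+1)$ and $\overline{O} \cap (\partial A \setminus K) = \emptyset$; but then $\partial(O \cap A)$ would be contained in $(\partial A \cap \overline O) \cup (\partial O \cap A) = K \cup (\partial O \cap A)$, and one can arrange $\partial O$ to avoid $\partial A$, making $O \cap A$ a set whose boundary is a proper compact subset — contradicting that $A$ is unbounded and $\R^d \setminus A$ has an unbounded component (a bounded-boundary set is either bounded or has bounded complement). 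I would make this precise using a separating sphere argument: since $K$ is a bounded component of the closed set $\partial A$, there is $R_1 > R_0$ with $\partial A \cap \partial B(o,R_1)$ and the whole annular region giving a clean decomposition.

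The step I expect to be the genuine obstacle is this last gluing argument — upgrading ``$\partial A$ has points near infinity'' to ``$\partial A$ has an \emph{unbounded connected} component.'' The cleanest route is probably to avoid component-chasing entirely and instead use a compactness/continuity argument: define, for each $R$, the ``boundary crossing'' set on the sphere $\partial B(o,R)$, observe it is nonempty and closed for all large $R$, and use that the nested family $\{\,\overline{(\partial A) \setminus B(o,R)}\,\}_{R>0}$ has the finite intersection property together with a connectedness-under-limits lemma (a decreasing intersection of compact connected sets is connected) applied to suitable truncations $\partial A \cap (\overline{B(o,2R)} \setminus B(o,R))$. Alternatively, and perhaps most efficiently, I would cite the classical result that the frontier of a proper open subset $U \subsetneq \R^d$ with $U$ and $\R^d \setminus \overline U$ both unbounded must have an unbounded connected component — this is essentially a consequence of unicoherence of $\R^d$ and the theorem that in a unicoherent space the common boundary of a connected set and the (connected) unbounded complementary component is connected at infinity. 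Given that the paper's application only needs \emph{some} unbounded connected subset of $\partial A$, I would favour writing out the elementary separating-sphere version in full rather than invoking unicoherence, since it keeps the note self-contained.
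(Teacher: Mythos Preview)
Your ``elementary separating-sphere'' route has real gaps, and the paper simply uses the unicoherence argument you mention but set aside.

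First, your contradiction scheme is mis-aimed: you fix one boundary point $x_0$, let $K$ be its component in $\partial A$, and try to derive a contradiction from $K$ being bounded. But the lemma only asks for \emph{some} unbounded component, so boundedness of this particular $K$ contradicts nothing. Second, even if you assume every component is bounded, the step ``find an open $O$ with $K\subset O$ and $\overline{O}\cap(\partial A\setminus K)=\emptyset$'' need not be available: $K$ is a component of the closed set $\partial A$, but components of closed (non-compact) subsets of $\R^d$ are generally not relatively open, so $K$ need not be separated from $\partial A\setminus K$ by any neighbourhood. Your fallback ``choose $R_1$ so that $\partial B(o,R_1)$ avoids $\partial A$'' fails for the same reason --- $\partial A$ may meet every large sphere. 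The nested-compact-connected idea is also not fleshed out: you have not exhibited any decreasing family of compact \emph{connected} pieces of $\partial A$.

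The paper's proof is exactly the unicoherence route, done in three lines rather than by citation. Let $B$ be the given unbounded component of $\R^d\setminus A$. Then $\overline{B}$ is closed and connected, and $\R^d\setminus B$ is closed and connected (it is $A$ together with the other complementary components, each of whose closure meets $A$). Since $\R^d$ is unicoherent and $\overline{B}\cup(\R^d\setminus B)=\R^d$, the intersection $\overline{B}\cap(\R^d\setminus B)=\partial B$ is connected. One checks $\partial B\subset A\cap\overline{\R^d\setminus A}=\partial A$, and $\partial B$ is unbounded because both $B$ and $A\subset\R^d\setminus B$ are unbounded while $\R^d\setminus B(o,R)$ is connected for $d\ge 2$. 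So $\partial B$ is the sought unbounded connected subset of $\partial A$. This is short enough that there is no reason to avoid it; the ``elementary'' alternative you prefer would, if it can be made to work at all, essentially reprove unicoherence along the way.
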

\begin{proof}
Let $B$ be an 
 unbounded component of $\R^d \setminus A$.
 Denote  the closure of 
$B$ by
 $\overline{B}$.
Then both $\overline{B}$ and $\R^d \setminus B$ are 
closed and connected.  
By the unicoherence of $\R^d$ \cite{Uniref}, the set
$ \overline{B}  \cap A = 
\overline{B} \cap (\R^d \setminus B) $
 is connected.  Moreover it
  is unbounded,  and contained in $\partial A$. 
\end{proof}

Given $\eps >0$, let $\txi_{\lambda,\eps} := 
\cup_{k : \rho_k >0} B(y_k, \eps \rho_k)$ and let 
 $\txi^*_{\lambda,\eps} := 
\R^d \setminus 
 \txi_{\lambda,\eps} $. 
Let $\xi_\lambda^0 := \cup_{\{k: r_k =0\}} \{y_k\} $, the union of 
balls of radius zero  contributing to our Boolean model ($\txi_{\lambda,\eps}$
 is the
union of all the other balls, scaled by $\eps$).
If $\Pr[\rho =0]>0$ then $\xi_\lambda^0$ is 
(almost surely)
non-empty but locally finite.

\begin{lemm}
Let $\eps \in (0,1)$.
If $E_\lambda$ occurs
then $\txi_{\lambda,\eps}^* \cup \xi^0_\lambda$ percolates.
\end{lemm}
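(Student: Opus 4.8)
The plan is to show that if $E_\lambda$ occurs, then there is an unbounded connected subset of $\txi_{\lambda,\eps}^* \cup \xi^0_\lambda$, and I would do this by exhibiting an explicit ``rope'' of vacant space running along the positive $x$-axis. The event $E_\lambda$ guarantees two things: first, that $\xi_\lambda$ (and hence a fortiori $\txi_{\lambda,\eps}$, since $\eps<1$ shrinks every positive-radius grain and the radius-zero grains contribute only to $\xi^0_\lambda$) misses $I_0 \cup I_1$; second, that for every $k \geq 2$ the component $W_{k,\lambda}$ of $\xi_\lambda$ meeting $I_k$ has diameter at most $k/2$. The idea is that a grain of $\txi_{\lambda,\eps}$ reaching a point of the $x$-axis near abscissa $k$ must belong to some $W_{j,\lambda}$ with $j$ close to $k$, and the diameter bound then prevents it from reaching the axis near the origin; so a suitably chosen sequence of points on the axis, together with the radius-zero points, stays in $\txi_{\lambda,\eps}^* \cup \xi^0_\lambda$.

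More precisely, first I would fix $\eps \in (0,1)$ and assume $E_\lambda$ holds. For each $k$, I would identify a point $p_k$ on $I_k \subset \{o'\}\times\R$ (say $p_k = (k+\tfrac12, o')$, or a point of $\xi^0_\lambda \cap I_k$ if there is one) and argue that $p_k \in \txi_{\lambda,\eps}^* \cup \xi^0_\lambda$: if $p_k \in \txi_{\lambda,\eps}$ it lies in $B(y_j, \eps\rho_j)$ for some grain, that grain is part of $W_{j,\lambda}$ for the $j$ with $y_j$'s first coordinate in $I_j$, and since $\|y_j - p_k\| \le \eps \rho_j < \rho_j \le D(W_{j,\lambda}) \le j/2$ we would get $|j - k| \le j/2 + 1$, which still allows $j$ to be comparable to $k$ — so this alone does not immediately give a contradiction, and I would instead need to connect consecutive $p_k$ by segments and show the whole polygonal path, not just its vertices, avoids $\txi_{\lambda,\eps}$. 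So the cleaner route is: let $L := \{o'\}\times[0,\infty)$ be the ray, and show $L \cap \txi_{\lambda,\eps} = \emptyset$, after which $L$ itself (or $L$ augmented with the discrete set $\xi^0_\lambda$, which does not disconnect a one-dimensional object) is the desired unbounded connected subset of $\txi_{\lambda,\eps}^* \cup \xi^0_\lambda$. For $L \cap \txi_{\lambda,\eps} = \emptyset$: a point of $L$ at height $t$ lies in $B(k, k+1)\times\{o'\}$ style interval — rather, it lies ``over'' $I_k$ for $k = \lfloor t \rfloor$; if it were in $B(y_j, \eps\rho_j)$, then the grain $B(y_j,\rho_j)$ meets $L$, hence meets some $I_m$, so the grain lies in $W_{m,\lambda}$, and $\rho_j \le D(W_{m,\lambda}) \le m/2$ when $m \ge 2$ while also $B(y_j,\rho_j)\ni$ (point over $I_k$) forces $|m-k|$ small and $k \le \rho_j + m \le 3m/2$; combined with $E_\lambda$ ruling out any grain meeting $I_0\cup I_1$, one pins down $m$ and derives $\eps\rho_j \ge$ (vertical distance from $y_j$ to $L$) which, together with the horizontal constraint and $\rho_j \le m/2$, contradicts $\eps < 1$ once $t$ (equivalently $k$) is large, and the small-$k$ case is handled by $E_\lambda$ directly on $I_0 \cup I_1$ and by choosing the ray to start far enough out.

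The main obstacle, and where I would spend the most care, is the geometric bookkeeping in the previous step: the diameter bound $D(W_{m,\lambda}) \le m/2$ is only \emph{linear} in $m$, so a grain centred near abscissa $m$ really can reach down to abscissa $m/2$, and one cannot get a contradiction from a single grain and a single point of the ray. The resolution is that the ray $L$ is genuinely one-dimensional and \emph{connected}, so if it met $\txi_{\lambda,\eps}$ it would meet it in an interval whose left endpoint is the boundary of a grain; tracking this left endpoint, it lies over some $I_k$ with $k \ge 2$ (using $E_\lambda$), is at height essentially $k$, sits on $\partial B(y_j, \eps\rho_j)$ with $\eps\rho_j \ge$ half the gap the grain spans along $L$, and the grain's membership in $W_{m,\lambda}$ with $m \le$ (roughly) $k$ forces $\rho_j \le m/2 \le k/2$, so $\eps\rho_j < \rho_j \le k/2$ cannot cover the portion of $L$ from height $\approx k$ down to where the grain's component $W_{m,\lambda}$ must already have ``started'' near $I_m$ with $m$ near $k$ — i.e. iterating this along $L$ forces the grains covering $L$ to have centres with $x$-coordinates strictly decreasing by a definite fraction at each step, eventually reaching $I_0 \cup I_1$, contradicting $E_\lambda$. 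I would therefore phrase the argument as: suppose for contradiction $L \cap \txi_{\lambda,\eps} \neq \emptyset$, let $s := \inf\{ t \ge 0 : (o',t) \in \txi_{\lambda,\eps}\}$; then $s \ge 2$ by $E_\lambda$, the point $(o',s)$ lies on $\partial B(y_j,\eps\rho_j)$ for some $j$, the grain $B(y_j,\rho_j)$ meets $L$ at heights $> s$ hence meets some $I_m$ with $m > s - \rho_j \ge s - m/2$ giving $m > 2s/3 \ge 2$, but then $D(W_{m,\lambda}) \le m/2$ together with the grain reaching height $s$ on $L$ forces $m \ge s - (\text{something}) $ and a short computation with $\eps < 1$ yields $(o',s') \in \txi_{\lambda,\eps}$ for some $s' < s$, contradicting minimality of $s$. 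Once $L \cap \txi_{\lambda,\eps} = \emptyset$ is established, $L$ is an unbounded connected subset of $\txi_{\lambda,\eps}^*$, hence of $\txi_{\lambda,\eps}^* \cup \xi^0_\lambda$, so the latter percolates, completing the proof.
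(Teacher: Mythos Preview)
Your approach has a genuine gap: the central claim that the nonnegative $x$-axis $L$ is disjoint from $\txi_{\lambda,\eps}$ under $E_\lambda$ is simply false. Nothing in $E_\lambda$ prevents a grain from being centred \emph{on} the $x$-axis. For a concrete counterexample in $d=2$, place a single grain of radius $1$ at $(5,0)$. Then $\xi_\lambda \cap (I_0\cup I_1)=\emptyset$; the grain meets only $I_4$ and $I_5$, with $D(W_{4,\lambda})=2\le 4/2$ and $D(W_{5,\lambda})=2\le 5/2$; and $D(W_{k,\lambda})=1\le k/2$ for every other $k\ge 2$. So $E_\lambda$ holds, yet $\txi_{\lambda,\eps}=B((5,0),\eps)$ meets $L$ in the interval $[5-\eps,5+\eps]\times\{0\}$. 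In this configuration your infimum $s$ equals $5-\eps$, there is no other grain, and no $s'<s$ with $(s',0)\in\txi_{\lambda,\eps}$ exists; the ``short computation'' you invoke cannot produce one. The hand-waved descent argument (``centres decreasing by a definite fraction'') has no basis: the diameter bound $D(W_{m,\lambda})\le m/2$ controls how far a component can reach \emph{away} from $I_m$, but not whether it covers $I_m$ itself.

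The missing idea is that one cannot stay on a one-dimensional ray; one must go \emph{around} the grains in $\R^d$. The paper does this via unicoherence. Set $A$ to be the half-line $[1,\infty)\times\{o'\}$ together with every component of $\xi_\lambda$ meeting it. The event $E_\lambda$ is used only to ensure $A\subset [1,\infty)\times\R^{d-1}$ (each component meeting $I_k$ has diameter $\le k/2$, so reaches no further left than $x=k/2\ge 1$). Hence $A$ is closed, connected, unbounded, and its complement has an unbounded component (containing $o$). Unicoherence of $\R^d$ then gives that $\partial A$ has an unbounded connected component. Finally, no point of $\partial A$ lies in the interior of any ball $B(y_k,\rho_k)$, and since $\eps<1$ forces $B(y_k,\eps\rho_k)$ into that interior whenever $\rho_k>0$, we get $\partial A\subset \txi_{\lambda,\eps}^*\cup\xi_\lambda^0$. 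That unbounded connected piece of $\partial A$ is the percolating set you were looking for.
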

\begin{proof}
Suppose $E_\lambda $ occurs. Let $A$ be the union of
 the half-line $[1,\infty) \times \{o'\}$, with all
components of $\xi_\lambda$ intersecting this half-line.
 Then $A$ is connected, unbounded,
 and contained in
the half-space $[1,\infty) \times \R^{d-1}$ so
$o$ lies in an unbounded component of $\R^d \setminus A$.
%
%
Therefore by Lemma \ref{unilem}, $\partial A$ has an unbounded
 connected component.

No point of $\partial A$ lies  in 
the interior of any of the balls $B(y_k,\rho_k)$.
Therefore 
 $\partial A \subset
\txi^*_{\lambda,\eps} \cup \xi_\lambda^0$.
Thus $ \txi^*_{\lambda,\eps} \cup \xi_\lambda^0$ has an
unbounded connected subset.
\end{proof}

\begin{proof}[Proof of (\ref{eq:D})]
Assume $\lambda_D >0$ (else there is nothing to prove).
 Suppose $\lambda \in (0, \lambda_D)$
and $\eps \in (0,1)$. 
By the last two lemmas,
with strictly positive probability
the set $\txi_{\lambda,1-\eps}^* \cup \xi_\lambda^0$ percolates.
Almost surely,
 $\txi_{\lambda,1-\eps}^*  $ is open,
 $\xi_\lambda^0$ is locally finite and
all points of
 $\xi_\lambda^0$ lie either in $\txi_{\lambda,1-\eps}^* $ or
in the interior of 
  $\txi_{\lambda,1-\eps} $. Therefore if 
the set
 $\txi_{\lambda,1-\eps}^* \cup \xi_\lambda^0$ percolates, so
does $\txi_{\lambda,1-\eps}^*  $, and so does 
$\txi_{\lambda,1-\eps}^* \setminus \xi_\lambda^0$.
Thus the set
$
\txi_{\lambda,1-\eps}^* \setminus \xi_{\lambda}^0, 
$
which is equal to
$ \R^d \setminus \cup_k B(y_k,(1-\eps)\rho_k )$,
 percolates with strictly
positive probability,
and  hence by ergodicity, with probability 1. 
Hence by scaling (see \cite{MR}) the set $\xi_{(1-\eps)^{d}\lambda}^*$ also
percolates almost surely, so that
$\lambda_c^* \geq (1-\eps)^{d} \lambda$, and therefore 
$\lambda_c^* \geq \lambda_D$.
\end{proof}
{\bf Acknowledgement.} I thank the referees for some helpful
remarks.

\end{document}